\newtheorem{lemma}{Lemma}[section]
\newtheorem{remark}{Remark}[section]
\newtheorem{theorem}{Theorem}[section]
\newtheorem{corollary}{Corollary}[section]
\newtheorem{problem}{Problem}
\def\CE{\mathcal{E}}
\def\CT{\mathcal{T}}
\def\E{K}
\def\G{\Gamma}
\def\Go{\G_0}
\def\HuE{H^1(\E)}
\def\HuO{H^1(\O)}
\def\LO{L^2(\O)}
\def\N{\mathbb{N}}
\def\O{\Omega}
\def\P{\mathbb{P}}
\def\R{\mathbb{R}}
\def\dim{\mathop{\mathrm{\,dim}}\nolimits}
\def\disp{\displaystyle}
\def\g{\gamma}
\def\hCT{\widehat{\CT}}
\def\hg{\widehat{\g}}
\def\l{\lambda}
\def\PiK{\Pi_k^{\E}}
\def\Vh{V_h}
\def\VK{V^{\E}_k}
\journal{}
\date{\today}
\begin{document}
\begin{frontmatter}

\title{A posteriori error estimates for a Virtual Elements Method for the Steklov eigenvalue problem.}

\author[1,2]{David Mora}
\ead{dmora@ubiobio.cl}
\address[1]{Departamento de Matem\'atica,
Universidad del B\'io-B\'io, Casilla 5-C, Concepci\'on, Chile.}
\author[2,3]{Gonzalo Rivera}
\ead{grivera@ing-mat.udec.cl}
\address[2]{Centro de Investigaci\'on en Ingenier\'ia Matem\'atica
(CI$^2$MA), Universidad de Concepci\'on, Casilla 160-C, Concepci\'on, Chile.}
\author[2,3]{Rodolfo Rodr\'iguez}
\ead{rodolfo@ing-mat.udec.cl}
\address[3]{Departamento de Ingenier\'ia Matem\'atica,
Universidad de Concepci\'on, Casilla 160-C, Concepci\'on, Chile.}

\begin{abstract} 
The paper deals with the a posteriori error analysis of a virtual element method for the Steklov
eigenvalue problem.
The virtual element method 
has the advantage of using general polygonal meshes,
which allows implementing very efficiently mesh refinement strategies.
We introduce a residual type a posteriori error estimator and prove its reliability and efficiency. We use the corresponding error estimator to drive an adaptive scheme. Finally,
we report the results of a couple of numerical tests, that allow us to assess the performance of this approach.

\end{abstract}

\begin{keyword} 
virtual element method 
\sep a posteriori error estimates
\sep Steklov eigenvalue problem
\sep polygonal meshes

\MSC 65N25 \sep 65N30 \sep 74S99.
\end{keyword}

\end{frontmatter}

%-----------------------------------------------------------------------

\setcounter{equation}{0}
\section{Introduction}
\label{SEC:INTR}

The {\it Virtual Element Method} (VEM), introduced in \cite{BBCMMR2013,BBMR2014},
is a recent generalization of the Finite Element Method,
which is characterized by the capability of dealing with
very general polygonal/polyhedral meshes.
The interest in numerical methods that can make use of general
polytopal meshes has recently undergone a significant growth
in the mathematical and engineering literature; among
the large number of papers on this subject, we cite as a minimal sample
\cite{BBCMMR2013,BLMbook2014,CGH14,DPECMAME2015,RW,ST04,TPPM10}.
Indeed, polytopal meshes can be very useful for
a wide range of reasons including meshing of
the domain,
automatic use of hanging nodes,  moving meshes and adaptivity.
VEM has been applied successfully in a
large range of problems; see for instance
\cite{AABMR13,ABMVsinum14,ALM15,BBCMMR2013,BBMR2014,BLM2015,BMR2016,BMRR,BBPS2014,CG2016,Paulino-VEM,MRR2015,PG2015,PPR15}.

The object of this paper is to introduce and analyze an a posteriori
error estimator of residual type for the virtual element approximation
of the Steklov eigenvalue problem.
In fact, due to the large flexibility of the meshes to which
the virtual element method is applied, mesh adaptivity becomes
an appealing feature as mesh refinement strategies can
be implemented very efficiently. For instance, hanging nodes can be introduced
in the mesh to guarantee the mesh conformity without spreading the refined
zones. In fact hanging nodes introduced by the refinement of a
neighboring element are simply treated as new nodes since adjacent
non matching element interfaces are perfectly acceptable. On the other hand, polygonal cells
with very general shapes are admissible thus allowing us to adopt
simple mesh coarsening algorithms.

The approximation of eigenvalue problems has been the object of
great interest from both the practical and theoretical points of view,
since they appear in many  applications.
We refer to \cite{Boffi} and the references therein for the
state of art in this subject area. In particular, 
the  Steklov eigenvalue problem, which involves the Laplace
operator but is characterized by the presence of the eigenvalue
in the boundary condition, appears in many applications;
for example, the study of the vibration modes of a structure
in contact with an incompressible fluid (see \cite{BRS00})
and the analysis of the stability of mechanical oscillators
immersed in a viscous media (see \cite{PT}). One of its main
applications arises from the dynamics of liquids in moving containers,
i.e., sloshing problems (see \cite{BRS03,CM79,Ch93,CY96,EM87,WE91}).

On the other hand, adaptive mesh refinement strategies based
on a posteriori error indicators play a relevant role in the
numerical solution of partial differential equations in a general
sense. For instance, they guarantee achieving errors below a tolerance with a reasonable computer cost in presence of singular solutions. Several approaches have been
considered to construct error estimators based on the residual
equations (see \cite{AMOJT,DPR2003,Verfurth} and the references therein).
In particular, for the Steklov eigenvalue problem we mention
\cite{A_M2AN2004,AP_APNUM2009,DrAA2011,GM-ima2011,XIMA13}. On the other
hand, the design and analysis  of a posteriori error bounds for the VEM
is a challenging task. References \cite{BMm2as,CGPS} are the only a
posteriori error analyses   for VEM currently available in the
literature. In \cite{BMm2as}, a posteriori error bounds for the
$C^{1}$-conforming VEM for the two-dimensional Poisson problem are
proposed.  In \cite{CGPS}, a posteriori error bounds are introduced for
the $C^{0}$-conforming VEM proposed in \cite{CMS2015} for the
discretization of second order linear elliptic
reaction-convection-diffusion problems with non constant coefficients in
two and three dimensions.

 We have recently developed in \cite{MRR2015} a virtual element method
for the Steklov eigenvalue problem. Under standard assumptions on the
computational domain, we have established that the resulting scheme provides
a correct approximation of the spectrum and proved optimal order
error estimates for the eigenfunctions and a double order for the eigenvalues. In order to
exploit the capability of VEM in the use of general polygonal meshes
and its flexibility for the application of mesh adaptive strategies,
we introduce and analyze an a posteriori error estimator for the
virtual element approximation introduced  in \cite{MRR2015}.
Since  normal fluxes of the VEM solution are not computable, they  will be replaced in the estimators 
by a proper projection. As a consequence of this replacement,
new additional terms appear in the a posteriori error estimator,
which represent the virtual inconsistency of VEM. Similar  terms also
appear in the other papers for a posteriori error estimates of
VEM (see \cite{BMm2as,CGPS}). We prove that the error estimator
is equivalent to the error and use the corresponding indicator to drive an adaptive scheme.

The outline of this article is as follows: in
Section~\ref{SEC:Apriori} we present the continuous and
discrete formulations of the Steklov eigenvalue problem
together with  the spectral characterization.
Then, we recall the a priori error estimates for the virtual element
approximation analyzed in \cite{MRR2015}. In Section~\ref{SEC:PosterioriError},
we define the a posteriori error estimator and proved its reliability and efficiency.
Finally, in Section~\ref{SEC:NUMERejemplo}, we report a set of numerical
tests that allow us to assess the performance of  an adaptive strategy driven by the estimator. We have also made a comparison between
the proposed estimator and the standard edge-residual error estimator for a finite element method.

Throughout the article we will denote by $C$ a generic constant
independent of the mesh parameter $h$, which may take different values
in different occurrences.

%-----------------------------------------------------------------------

\setcounter{equation}{0}
\section{The Steklov eigenvalue problem and its virtual element approximation}
\label{SEC:Apriori}
Let $\O\subset\R^2$ be a bounded domain with polygonal
boundary $\partial\O$. Let $\Go$ and $\G_1$ be disjoint
open subsets of $\partial\O$ such that $\partial\O=\bar{\G}_0\cup\bar{\G}_1$
with $\Go\ne\emptyset$. We denote by $n$ the outward
unit normal vector to $\partial\O$.

We consider the following eigenvalue problem:

Find $(\l,w)\in\R\times\HuO$, $w\ne0$, such that
$$
\left\{\begin{array}{l}
\Delta w=0\quad\text{in }\O,
\\[0.1cm]
\dfrac{\partial w}{\partial n}
=\left\{\begin{array}{ll}
\l w & \text{on }\Go,
\\
0 & \text{on }\G_1.
\end{array}\right.
\end{array}\right. 
$$

By testing the first equation above with $v\in\HuO$
and integrating by parts, we arrive at the following
equivalent weak formulation:
\begin{problem}
\label{P1}
Find $(\l,w)\in\R\times\HuO$, $w\ne0$, such that
$$
\int_{\O}\nabla w\cdot\nabla v
=\l\int_{\Go}wv\qquad\forall v\in\HuO.
$$
\end{problem}

According to \cite[Theorem 2.1]{MRR2015}, we know that the solutions
$(\l,w)$ of the problem above are:
\begin{itemize}
\item$\l_{0}=0$, whose associated eigenspace is the space of
constant functions in $\O$;
\item a sequence of positive finite-multiplicity
eigenvalues $\{\l_{k}\}_{k\in \N}$ such that $\lambda_k\rightarrow\infty$. 
\end{itemize}
The eigenfunctions
corresponding to different eigenvalues are orthogonal in $L^{2}(\G_{0})$.
Therefore the eigenfunctions $w^{k}$ corresponding to $\l_{k}>0$ satisfy 
\begin{equation}
\label{rr1}
\int_{\Go}w^{k}=0.
\end{equation}

We denote the bounded bilinear symmetric forms appearing in Problem \ref{P1} as follows:
\begin{align*}
%\label{ab}
a(w,v)
& :=\int_{\O}\nabla w\cdot\nabla v,
\qquad w,v\in\HuO,
\\
%\label{ba}
b(w,v)
& :=\int_{\Go}wv,
\qquad w,v\in\HuO.
\end{align*}

Let $\left\{\CT_h\right\}_h$ be a sequence of decompositions of $\O$
into polygons $\E$. We assume that for every mesh $\CT_h$, $\overline{\G}_{0}$ and $\overline{\G}_{1}$ are union of edges of elements $\E\in \CT_{h}$. Let $h_\E$ denote the diameter of the element
$\E$ and $h$ the maximum of the diameters of all the elements of the
mesh, i.e., $h:=\max_{\E\in\CT_h}h_\E$.

For the analysis, we will make as in \cite{BBCMMR2013,MRR2015} the following assumptions.
\begin{itemize}
\item \textbf{A1.} Every mesh $\CT_h$ consists of a finite number of 
\textit{simple} polygons ({i.e.}, open simply connected sets with
non self intersecting polygonal boundaries).
\item \textbf{A2.} There exists $\g>0$ such that, for all meshes
$\CT_h$, each polygon $\E\in\CT_h$ is star-shaped with respect to a ball
of radius greater than or equal to $\g h_{\E}$.
\item \textbf{A3.} There exists $\hg>0$ such that, for all meshes
$\CT_h$, for each polygon $\E\in\CT_h$, the distance between any two of
its vertices is greater than or equal to $\hg h_{\E}$.
\end{itemize}

We consider now a simple polygon $\E$ and, for $k\in\N$, we define 
$$
\mathbb{B}_k(\partial\E)
:=\left\{v\in C^0(\partial\E): \ v|_{\ell}\in\P_k(\ell)
\text{ for all edges }\ell\subset\partial\E\right\}.
$$
We then consider the finite-dimensional space defined as follows:
\begin{equation}\label{Vk}
\VK
:=\left\{v\in\HuE:
\ v|_{\partial\E}\in\mathbb{B}_k(\partial\E)
\text{ and }\Delta v|_{\E}\in\P_{k-2}(\E)\right\},
\end{equation}
where, for $k=1$, we have used the convention that
$\P_{-1}(\E):=\left\{0\right\}$. We choose in this
space the degrees of freedom introduced in \cite[Section~4.1]{BBCMMR2013}.
Finally, for every decomposition $\CT_h$ of $\O$ into simple
polygons $\E$ and for a fixed $k\in\N$, we define
$$
\Vh:=\left\{v\in\HuO:\ v|_{\E}\in\VK\quad\forall\E\in\CT_h\right\}.
$$
In what follows, we will use standard Sobolev spaces, norms and seminorms and also  the broken $H^1$-seminorm
$$
\left|v\right|_{1,h}^2
:=\sum_{\E\in\CT_h}\left\|\nabla v\right\|_{0,\E}^2,
$$
which is well defined for every $v\in\LO$ such that $v|_{\E}\in\HuE$ for each polygon $\E\in\CT_h$.

We split the bilinear form $a(\cdot,\cdot)$ as follows:
$$
a(u,v)=\sum_{\E\in\CT_h}a^{\E}(u,v),
\qquad u,v\in\HuO,
$$
where 
\begin{equation*}
%\label{defin}
a^{\E}(u,v)
:=\int_{\E}\nabla u\cdot\nabla v,
\qquad u,v\in\HuE.
 \end{equation*}
Due to the implicit space definition,  we must have into account
that  we would not know how
to compute $a^{\E}(\cdot,\cdot)$ for $u_h,v_h\in V_h$. Nevertheless, the final
output will be a local matrix on each element $\E$ whose associated
bilinear form can be  exactly computed whenever one of the two entries is a polynomial
of degree $k$. This will allow us to retain the optimal approximation
properties of the space $V_h$.

With this end, for any $\E\in\CT_h$ and for any sufficiently
regular function $\varphi$, we define first
\begin{equation*}%\label{equd}
\overline{\varphi}:=\frac1{N_{\E}}\sum_{i=1}^{N_{\E}}\varphi(P_i),
\end{equation*}
where $P_i$, $1\le i\le N_{\E}$, are the vertices of $\E$. Then, we
define the projector $\PiK:\ \VK\longrightarrow\P_k(\E)\subseteq\VK$ for
each $v_h\in\VK$ as the solution of 
\begin{subequations}
%\label{23}
\begin{align*}
a^{\E}\big(\PiK v_h,q\big)
& =a^{\E}(v_h,q)
\qquad\forall q\in\P_k(\E),
%\label{numero}
\\
\overline{\PiK v_h}
&=\overline{v_h}.
%\label{numeroo}
\end{align*}
\end{subequations}

On the other hand, let $S^{\E}(\cdot,\cdot)$ be any symmetric
positive definite bilinear form to be chosen as to satisfy 
\begin{equation}
\label{20}
c_0\,a^{\E}(v_h,v_h)\leq S^{\E}(v_h,v_h)\leq c_1\,a^{\E}(v_h,v_h)
\qquad\forall v_h\in\VK\text{ with }\PiK v_h=0
\end{equation}
for some positive constants $c_0$ and $c_1$ independent of $\E$. Then, set
$$
a_h(u_h,v_h)
:=\sum_{\E\in\CT_h}a_h^{\E}(u_h,v_h),
\qquad u_h,v_h\in\Vh,
$$
where $a_h^{\E}(\cdot,\cdot)$ is the bilinear form defined on
$\VK\times\VK$ by
\begin{equation*}
%\label{21}
a_h^{\E}(u_h,v_h)
:=a^{\E}\big(\PiK u_h,\PiK v_h\big)
+S^{\E}\big(u_h-\PiK u_h,v_h-\PiK v_h\big),
\qquad u_h,v_h\in\VK. 
\end{equation*}
Notice that the bilinear form $S^{\E}(\cdot,\cdot)$ has to be actually
computable for $u_h,v_h\in V_{k}^{\E}$.

The following properties of $a_h^{\E}(\cdot,\cdot)$
have been established in \cite[Theorem~4.1]{BBCMMR2013}.
\begin{itemize}
\item \textit{$k$-Consistency}: 
\begin{equation}
\label{consistencia}
a_h^{\E}(p,v_h)
=a^{\E}(p,v_h)
\qquad\forall p\in\P_k(\E),
\quad\forall v_h\in\VK. 
\end{equation}
\item \textit{Stability}: There exist two positive constants $\alpha_*$
and $\alpha^*$, independent of $\E$, such that:
\begin{equation}
\label{24}
\alpha_*a^{\E}(v_h,v_h)
\leq a_h^{\E}(v_h,v_h)
\leq\alpha^*a^{\E}(v_h,v_h)
\qquad\forall v_h\in\VK. 
\end{equation}
\end{itemize}
 
Now, we are in a position to write the virtual
element discretization of Problem~\ref{P1}.
\begin{problem}
\label{P11}
Find $(\l_h,w_h)\in\R\times\Vh$, $w_h\ne0$, such that
$$
a_h(w_h,v_h)
=\l_h b(w_h,v_h)
\qquad\forall v_h\in\Vh.
$$
\end{problem}

According to \cite[Theorem 3.1]{MRR2015} we know that the solutions
$(\l_{h},w_{h})$ of the problem above are:
\begin{itemize}
\item$\l_{h0}=0$, whose associated eigenfunction are the constant functions in $\O$.
\item$\{\l_{hk}\}_{k=1}^{N_{h}}$, with $N_{h}:=\dim\left\{v_{h}|_{\G_{0}}\text{, }v_{h}\in V_{h}\right\}-1$, which are  positive eigenvalues
repeated according to their respective multiplicities.
\end{itemize}
Moreover, the eigenfunctions corresponding to different eigenvalues are orthogonal
in $L^{2}(\G_{0})$. Therefore the eigenfunctions $w_{h}^{k}$ corresponding to $\l_{hk}>0$ satisfy 
\begin{equation}
\label{rr2}
\int_{\Go}w_{h}^{k}=0.
\end{equation}

Let $(\l,w)$ be a solution to Problem~\ref{P1}.
We assume $\l>0$ is a simple eigenvalue and we normalize $w$ so that
$\left\|w\right\|_{0,\G_{0}}=1$. Then, for each mesh $\CT_{h}$, there
exists a solution $(\l_{h},w_{h})$ of Problem~\ref{P11}
such that $\l_h\rightarrow\l$,  $\left\|w_{h}\right\|_{0,\G_{0}}=1$ and $\|w-w_{h}\|_{1,\O}\rightarrow 0$
as $h\rightarrow 0$. Moreover, according to \eqref{rr1} and \eqref{rr2},
we have that $w$ and $w_{h}$ belong to the space 
\begin{equation*}
%\label{errore}
V:= \left\{v\in \HuO:\int_{\Go}v=0\right\}. 
\end{equation*}
Let us remark that the following generalized Poincar\'e inequality holds true
in this space: there exists $C>0$ such that 
\begin{equation}
\label{poincare1}
\|v\|_{1,\O}\leq C|v|_{1,\O}\qquad \forall v\in V.
\end{equation}

The following a priori error estimates have been proved
in \cite[Theorems 4.2--4.4]{MRR2015}: 
there exists $C>0$ such that for all $r\in[\frac{1}{2},r_{\O})$
\begin{align}\label{eq29}
\left\|w-w_h\right\|_{1,\O}&\le Ch^{\min\{r,k\}},\\\label{eq292}
\left|\l-\l_h\right|&\le Ch^{2\min\{r,k\}},\\
\left\|w-w_h\right\|_{0,\G_0}&\le Ch^{\min\{r,1\}/2+\min\{r,k\}},\label{eq293}
\end{align}
where the constant $r_\O> \frac{1}{2}$ is the Sobolev exponent
for the Laplace problem with Neumann boundary conditions. Let us remark that $r_\O>1$, if $\O$ is convex, and $r_\O:=\frac{\pi}{\omega}$ with $\omega$ being the largest re-entrant angle of $\O$, otherwise.

% --------------------------------------
\setcounter{equation}{0}
\section{A posteriori error analysis}
\label{SEC:PosterioriError}
The aim of this section is to introduce a suitable
residual-based error estimator for the Steklov
eigenvalue problem which be fully computable,
in the sense that  it depends only on quantities available from the VEM solution. Then, we will show its equivalence with the error.
For this purpose, we introduce the following definitions and notations.

For any polygon $\E\in \CT_h$, we denote by $\CE_{\E}$ the set of edges of $\E$
and 
$$\CE:=\bigcup_{\E\in\CT_h}\CE_{\E}.$$
We decompose $\CE=\CE_{\O}\cup\CE_{\Go}\cup\CE_{\G_1}$,
where $\CE_{\Go}:=\{\ell\in \CE:\ell\subset \Go\}$, $\CE_{\G_1}:=\{\ell\in \CE:\ell\subset \G_1\}$
and $\CE_{\O}:=\CE\backslash(\CE_{\Go}\cup\CE_{\G_1})$.
For each inner edge $\ell\in \CE_{\O}$ and for any  sufficiently smooth  function
$v$, we define the jump of its normal derivative on $\ell$ by
$$\left[\!\!\left[ \dfrac{\partial v}{\partial{ n}}\right]\!\!\right]_\ell:=\nabla (v|_{\E})  \cdot n_{\E}+\nabla ( v|_{\E'}) \cdot n_{\E'} ,$$
where $\E$ and $\E'$ are  the two elements in $\CT_{h}$  sharing the
edge $\ell$ and $n_{\E}$ and $n_{\E'}$ are the respective outer unit normal vectors.
 
As a consequence of the mesh regularity assumptions,
we have that each polygon $\E\in\CT_h$ admits a sub-triangulation $\CT_h^{\E}$
obtained by joining each vertex of $\E$ with the midpoint of the ball with respect
to which $\E$ is starred. Let $\hCT_h:=\bigcup_{\E\in\CT_h}\CT_h^{\E}$.
Since we are also assuming \textbf{A3}, $\big\{\hCT_h\big\}_h$
is a shape-regular family of triangulations of $\O$. 

We introduce bubble functions on polygons as follows (see  \cite{CGPS}).
 An interior bubble function $\psi_{\E}\in H_0^1(\E)$ for a polygon $\E$
can be constructed piecewise as the sum of the  cubic
bubble functions  for each triangle of the
sub-triangulation $\CT_h^{\E}$ that attain the value 1 at the barycenter of each triangle. On the other hand, an edge bubble function $\psi_{\ell}$ for
$\ell\in\partial \E$ is a piecewise quadratic function
attaining the value 1 at the barycenter of $\ell$ and vanishing
on the triangles $T\in\hCT_h$  that do not contain $\ell$
on its boundary.

The following results which establish standard estimates
for bubble functions will be useful in what follows (see \cite{AMOJT,Verfurth}).

\begin{lemma}[Interior bubble functions]
\label{burbujainterior}
For any $\E\in \CT_h$, let $\psi_{\E}$ be the corresponding interior bubble function.
Then, there exists a constant $C>0$ 
independent of  $h_\E$ such that
\begin{align*}
C^{-1}\|q\|_{0,\E}^2&\leq \int_{\E}\psi_{\E} q^2\leq \|q\|_{0,\E}^2\qquad \forall q\in \P_k(\E),\\
C^{-1}\| q\|_{0,\E}&\leq \|\psi_{\E} q\|_{0,\E}+h_\E\|\nabla(\psi_{\E} q)\|_{0,\E}\leq C\|q\|_{0,\E}\qquad \forall q\in \P_k(\E).
\end{align*}
\end{lemma}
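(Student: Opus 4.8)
The plan is to reduce every bound to the sub-triangulation $\CT_h^{\E}$, apply the classical bubble-function estimates on each triangle, and then sum the local contributions. First I would record the two trivial upper bounds: since $0\le\psi_{\E}\le1$ on $\E$ we immediately get $\int_{\E}\psi_{\E}q^2\le\|q\|_{0,\E}^2$ and $\|\psi_{\E}q\|_{0,\E}\le\|q\|_{0,\E}$.

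For the lower bound $C^{-1}\|q\|_{0,\E}^2\le\int_{\E}\psi_{\E}q^2$, I would use that $\psi_{\E}|_{T}=b_{T}$, the standard cubic bubble of the triangle $T\in\CT_h^{\E}$ normalized to $1$ at its barycenter. By assumptions \textbf{A1}--\textbf{A3}, $\{\hCT_h\}_h$ is shape-regular, so a scaling argument to a fixed reference triangle $\widehat{T}$ — on which $\widehat{q}\mapsto\big(\int_{\widehat{T}}b_{\widehat{T}}\,\widehat{q}^{\,2}\big)^{1/2}$ and $\widehat{q}\mapsto\|\widehat{q}\|_{0,\widehat{T}}$ are two norms on the finite-dimensional space $\P_k(\widehat{T})$, hence equivalent — yields a constant $C$ depending only on $k$ and the shape-regularity such that $C^{-1}\|q\|_{0,T}^2\le\int_{T}b_{T}q^2$ for every $q\in\P_k(T)$. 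Because $\P_k(\E)|_{T}\subseteq\P_k(T)$, summing over $T\in\CT_h^{\E}$ gives $\int_{\E}\psi_{\E}q^2=\sum_{T}\int_{T}b_{T}q^2\ge C^{-1}\sum_{T}\|q\|_{0,T}^2=C^{-1}\|q\|_{0,\E}^2$. The same reasoning applied with the positive weight $b_{T}^2$ in place of $b_{T}$ gives $\|\psi_{\E}q\|_{0,\E}^2\ge C^{-1}\|q\|_{0,\E}^2$, which, after dropping the nonnegative gradient term, already furnishes the lower bound in the second chain of inequalities.

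It remains to bound $h_{\E}\|\nabla(\psi_{\E}q)\|_{0,\E}$ from above. Since $\psi_{\E}q$ is continuous, piecewise polynomial and vanishes on $\partial\E$, we have $\|\nabla(\psi_{\E}q)\|_{0,\E}^2=\sum_{T\in\CT_h^{\E}}\|\nabla(b_{T}q)\|_{0,T}^2$. On each triangle $\nabla(b_{T}q)=q\nabla b_{T}+b_{T}\nabla q$, and using $\|\nabla b_{T}\|_{L^{\infty}(T)}\le Ch_{T}^{-1}$, $\|b_{T}\|_{L^{\infty}(T)}\le1$, together with the inverse inequality $\|\nabla q\|_{0,T}\le Ch_{T}^{-1}\|q\|_{0,T}$ on $\P_k(T)$, we obtain $\|\nabla(b_{T}q)\|_{0,T}\le Ch_{T}^{-1}\|q\|_{0,T}$. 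The shape-regularity of $\CT_h^{\E}$ together with \textbf{A2} gives $h_{T}\ge c\,h_{\E}$, so $h_{\E}\|\nabla(b_{T}q)\|_{0,T}\le C\|q\|_{0,T}$; squaring and summing over $T$ yields $h_{\E}^2\|\nabla(\psi_{\E}q)\|_{0,\E}^2\le C\|q\|_{0,\E}^2$. Combining this with $\|\psi_{\E}q\|_{0,\E}\le\|q\|_{0,\E}$ completes the proof.

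The only genuinely delicate point is the uniformity of the constants in $h_{\E}$; this is exactly where assumptions \textbf{A1}--\textbf{A3} are used, ensuring that $\CT_h^{\E}$ consists of a uniformly bounded number of uniformly shape-regular triangles with $h_{T}\simeq h_{\E}$, so that a single scaling to the reference triangle suffices and no mesh-dependent factors accumulate in the summation.
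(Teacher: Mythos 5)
Your argument is correct: the paper does not prove this lemma but simply cites the standard references \cite{AMOJT,Verfurth} (with the polygonal adaptation via the sub-triangulation $\CT_h^{\E}$ taken from \cite{CGPS}), and your proof is precisely that standard argument — norm equivalence on $\P_k$ over a reference triangle, scaling under the shape regularity guaranteed by \textbf{A1}--\textbf{A3}, and summation over the triangles of $\CT_h^{\E}$, whose sizes are comparable to $h_{\E}$. The delicate points you flag (uniformity of constants, $h_{T}\simeq h_{\E}$, and the piecewise computation of $\nabla(\psi_{\E}q)$ for $\psi_{\E}q\in H_0^1(\E)$) are all handled correctly.
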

\begin{lemma}[Edge bubble functions]
\label{burbuja}
For any $\E\in \CT_h$ and $\ell\in\CE_{\E}$, let $\psi_{\ell}$
be the corresponding edge bubble function. Then, there exists
a constant $C>0$ independent of $h_\E$ such that
 \begin{equation*}
C^{-1}\|q\|_{0,\ell}^2\leq \int_{\ell}\psi_{\ell} q^2 \leq \|q\|_{0,\ell}^2\qquad
\forall q\in \P_k(\ell).
\end{equation*}
Moreover, for all $q\in\P_k(\ell)$, there exists an extension of  $q\in\P_k(\E)$ (again denoted by $q$) such that
 \begin{align*}
h_\E^{-1/2}\|\psi_{\ell} q\|_{0,\E}+h_\E^{1/2}\|\nabla(\psi_{\ell} q)\|_{0,\E}&\leq C\|q\|_{0,\ell}.
\end{align*}
\end{lemma}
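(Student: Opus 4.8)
The plan is to transport the statement to a fixed reference triangle, use equivalence of norms on finite-dimensional polynomial spaces there, and keep track of the powers of $h_\E$ produced by the affine changes of variables; since $\{\hCT_h\}_h$ is shape-regular (as recalled above), all the constants that appear will be independent of $h_\E$ and of $\E$. The first pair of inequalities is the easy part: the upper bound $\int_\ell\psi_\ell q^2\le\|q\|_{0,\ell}^2$ holds because $0\le\psi_\ell\le1$ on $\ell$, while for the lower bound I would note that $q\mapsto\big(\int_\ell\psi_\ell q^2\big)^{1/2}$ is a norm on $\P_k(\ell)$ (it vanishes only for $q\equiv0$, since $\psi_\ell>0$ on the relative interior of $\ell$), hence equivalent to $\|\cdot\|_{0,\ell}$; as both quantities scale like the length of $\ell$ under the affine map onto a unit-length reference edge, the equivalence constant depends only on $k$.

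For the second assertion, the first step is to locate the support of $\psi_\ell q$ inside $\E$. By the construction of $\CT_h^\E$, the edge $\ell$ is an edge of exactly one triangle $T_\ell\in\CT_h^\E$, namely the one joining the two endpoints of $\ell$ to the centre of the ball with respect to which $\E$ is starred, and $\psi_\ell$ vanishes on the triangles of $\hCT_h$ not containing $\ell$; hence $\|\psi_\ell q\|_{0,\E}=\|\psi_\ell q\|_{0,T_\ell}$ and $\|\nabla(\psi_\ell q)\|_{0,\E}=\|\nabla(\psi_\ell q)\|_{0,T_\ell}$. Since $\{\hCT_h\}_h$ is shape-regular and, by \textbf{A3}, $\mathrm{diam}(T_\ell)\ge|\ell|\ge\hg h_\E$ while $\mathrm{diam}(T_\ell)\le h_\E$, the triangle $T_\ell$ has diameter comparable to $h_\E$ and area comparable to $h_\E^2$. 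Consequently, fixing an affine bijection $F_{T_\ell}\colon\hat T\to T_\ell$ from a reference triangle $\hat T$ with a distinguished edge $\hat\ell$ onto $T_\ell$ with $\hat\ell\mapsto\ell$, the quantities $\|DF_{T_\ell}\|$ and $|\det DF_{T_\ell}|^{1/2}$ are bounded above by $Ch_\E$ and $\|DF_{T_\ell}^{-1}\|$ is bounded above by $Ch_\E^{-1}$, with $C$ depending only on $\g$ and $\hg$.

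Next I would fix the extension. Set $\hat q:=q\circ F_{T_\ell}\big|_{\hat\ell}\in\P_k(\hat\ell)$, let $\hat Q\in\P_k(\hat T)$ be the polynomial obtained from $\hat q$ by keeping it constant in the direction transverse to $\hat\ell$, and define the extension of $q$ to be $\hat Q\circ F_{T_\ell}^{-1}$, which is a polynomial of degree $\le k$ on $\E$ that restricts to $q$ on $\ell$; since $\psi_\ell$ restricted to $T_\ell$ is, by definition, the standard quadratic edge bubble of $T_\ell$, one has $\psi_\ell\circ F_{T_\ell}=\psi_{\hat\ell}$ with $\psi_{\hat\ell}$ the reference edge bubble, so that $\psi_\ell q$ pulls back to $\psi_{\hat\ell}\hat Q$ on $\hat T$. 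The map $\hat q\mapsto\hat Q$ is linear and injective, hence on the finite-dimensional space $\{\hat Q:\hat q\in\P_k(\hat\ell)\}$ the quantity $\|\psi_{\hat\ell}\hat Q\|_{0,\hat T}+\|\hat\nabla(\psi_{\hat\ell}\hat Q)\|_{0,\hat T}$ ($\hat\nabla$ being the gradient in the reference variables) is a continuous function of $\hat q$ and therefore bounded by $C(k)\|\hat q\|_{0,\hat\ell}$. Transporting this back through $F_{T_\ell}$ — using $\|v\|_{0,T_\ell}\le Ch_\E\|v\circ F_{T_\ell}\|_{0,\hat T}$, $\|\nabla v\|_{0,T_\ell}\le C\|\hat\nabla(v\circ F_{T_\ell})\|_{0,\hat T}$ and $\|\hat q\|_{0,\hat\ell}\le Ch_\E^{-1/2}\|q\|_{0,\ell}$ — yields $h_\E^{-1/2}\|\psi_\ell q\|_{0,\E}\le C\|q\|_{0,\ell}$ and $h_\E^{1/2}\|\nabla(\psi_\ell q)\|_{0,\E}\le C\|q\|_{0,\ell}$, and adding the two gives the claim.

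I expect the crux to be two-fold. First, the reference-triangle inequality is \emph{not} available for an arbitrary element of $\P_k(\hat T)$, because there $\|\cdot\|_{0,\hat\ell}$ is merely a seminorm; a concrete extension operator must be fixed before finite dimensionality can be invoked, and the particular choice (extension constant in the transverse direction) is what makes $\psi_{\hat\ell}\hat Q$ controlled by $\|\hat q\|_{0,\hat\ell}$. Second, every constant must be independent of $h_\E$ and of $\E$, which rests entirely on the shape-regularity of $\hCT_h$ granted by \textbf{A1}--\textbf{A3}, in particular on the two-sided bound $C^{-1}h_\E^2\le|T_\ell|\le h_\E^2$ and on $|\ell|\ge\hg h_\E$. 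The remaining manipulations are the routine affine-scaling bookkeeping.
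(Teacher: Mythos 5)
Your argument is correct. The paper itself does not prove this lemma: it is stated as a standard bubble-function estimate with a pointer to the references \cite{AMOJT,Verfurth}, and only the choice of extension is made explicit afterwards in Remark~\ref{extencion}. Your proof is the standard one behind those citations: reduce to the single sub-triangle $T_\ell\in\CT_h^{\E}$ that supports $\psi_\ell$ inside $\E$, pass to a reference triangle, fix a concrete linear extension operator so that finite-dimensional norm equivalence applies to a genuine norm rather than the seminorm $\|\cdot\|_{0,\hat\ell}$, and track the powers of $h_\E$ through the affine scaling; the two-sided comparability of $\mathrm{diam}(T_\ell)$ and $|T_\ell|$ with $h_\E$ and $h_\E^2$, which you correctly trace back to \textbf{A2}--\textbf{A3} via the shape regularity of $\hCT_h$, is exactly what makes the constants uniform. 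The only (immaterial) difference from the paper is the extension itself: you prolong $\hat q$ constantly in the transverse direction on the reference triangle and push forward, which yields a constant prolongation along a possibly oblique direction in the physical element, whereas Remark~\ref{extencion} prolongs constantly along the normal to $\ell$; since the lemma only asserts the existence of some polynomial extension with the stated bounds, and both choices give elements of $\P_k(\E)$ restricting to $q$ on $\ell$ and satisfying the same scaling estimates, either works.
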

\begin{remark}
\label{extencion}
A possible way of extending $q$ from $\ell\in\CE_{\E}$ to $\E$
so that Lemma~\ref{burbuja} holds is  as follows:
first we extend $q$ to the straight line $L\supset\ell$ using the same polynomial function.
Then, we extend it to the whole plain through a constant
prolongation in the normal direction to $L$. Finally, we restrict  the latter to $\E$. 
\end{remark}

The following lemma provides an error equation which will be the starting
point of our error analysis. From now on, we will denote by
$e:=(w-w_{h})\in V$ the eigenfunction error and by 
\begin{equation}
\label{saltocal}
J_{\ell}:=\left\{\begin{array}{ll}
\dfrac{1}{2}\left[\!\!\left[ \dfrac{\partial (\PiK  w_{h})}{\partial{ n}}\right]\!\!\right]_{\ell},\quad & \ell\in \CE_{\O},
\\[0.4cm]
\l_h w_h-\dfrac{\partial (\PiK  w_{h})}{\partial n},\quad & \ell\in\CE_{\G_0},
\\[0.4cm]
-\dfrac{\partial (\PiK  w_{h})}{\partial n},\quad & \ell\in\CE_{\G_1},
\end{array}\right. 
\end{equation}
the edge residuals.  Notice that $J_{\ell}$ are actually computable since they only  involve values of $w_{h}$ on $\G_{0}$ (which are computable in terms of the boundary degrees of freedom) and $\PiK  w_{h}\in\P_{k}(\E)$ which is also computable.
\begin{lemma}
\label{ext2}
For any $v\in \HuO$, we have the following identity:
$$
a(e,v)=\l b(w,v)-\l_h b(w_h,v)-\sum_{\E\in \CT_h}a^\E(w_h-\PiK  w_h,v)
+\sum_{\E\in \CT_h}\left[\int_{\E}\Delta (\PiK  w_h)v
+\sum_{\ell\in \CE_{\E}}\int_{\ell}J_{\ell} v\right].
$$
\end{lemma}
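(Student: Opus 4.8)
The plan is to start from the exact weak formulations and use integration by parts element by element. I would begin with the identity $a(e,v) = a(w,v) - a(w_h,v) = \l b(w,v) - a(w_h,v)$, using that $w$ solves Problem~\ref{P1}. The difficulty is that $a(w_h,v) = \sum_\E a^\E(w_h,v)$ is not directly tractable, so I would insert the projection by writing $a^\E(w_h,v) = a^\E(\PiK w_h, v) + a^\E(w_h - \PiK w_h, v)$; the second term is exactly the ``virtual inconsistency'' term that will survive in the final estimate, so I keep it aside.

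Next I would work on $\sum_\E a^\E(\PiK w_h, v) = \sum_\E \int_\E \nabla(\PiK w_h)\cdot\nabla v$. Since $\PiK w_h \in \P_k(\E)$ is smooth on each element, I integrate by parts on each $\E$ to get $\int_\E \nabla(\PiK w_h)\cdot\nabla v = -\int_\E \Delta(\PiK w_h)\, v + \int_{\partial\E} \frac{\partial(\PiK w_h)}{\partial n_\E}\, v$. Then I reorganize the sum of boundary integrals over all elements into a sum over edges $\ell \in \CE$. For an interior edge $\ell \in \CE_\O$ shared by $\E$ and $\E'$, the two contributions combine into $\int_\ell \left[\!\!\left[\frac{\partial(\PiK w_h)}{\partial n}\right]\!\!\right]_\ell v$, which by the definition \eqref{saltocal} of $J_\ell$ equals $2\int_\ell J_\ell v$; hence $\sum_{\ell\in\CE_\E}\int_\ell J_\ell v$ summed over all $\E$ reproduces this with the factor $\tfrac12$ absorbed. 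For a boundary edge $\ell \in \CE_{\G_1}$ there is only one element and the contribution is $\int_\ell \frac{\partial(\PiK w_h)}{\partial n} v = -\int_\ell J_\ell v$. For $\ell \in \CE_{\G_0}$ the single-element boundary term is $\int_\ell \frac{\partial(\PiK w_h)}{\partial n} v$, and I would add and subtract $\l_h \int_{\G_0} w_h v = \l_h b(w_h,v)$ so that the remaining edge contribution becomes $-\int_\ell J_\ell v$ with $J_\ell = \l_h w_h - \frac{\partial(\PiK w_h)}{\partial n}$, while the extracted $-\l_h b(w_h,v)$ appears as a separate global term.

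Collecting everything, $a(e,v)$ equals $\l b(w,v) - \l_h b(w_h,v)$ minus $\sum_\E a^\E(w_h - \PiK w_h, v)$ plus $\sum_\E\left[\int_\E \Delta(\PiK w_h)\,v + \sum_{\ell\in\CE_\E}\int_\ell J_\ell v\right]$, which is exactly the claimed identity. The main obstacle, and the only step requiring care, is the bookkeeping of signs and the factor $\tfrac12$ when converting the element-boundary sum $\sum_\E \int_{\partial\E}(\cdot)$ into an edge sum: each interior edge is counted twice with opposite normals, so the natural jump appears, and one must check that the definition of $J_\ell$ on $\CE_\O$ (with its $\tfrac12$) together with $\sum_\E\sum_{\ell\in\CE_\E}$ (which double-counts interior edges) is consistent. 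Everything else is a routine integration by parts on polynomials, valid since $\PiK w_h\in\P_k(\E)\subset H^2(\E)$ and $v\in H^1(\O)$, so the boundary traces are well defined. Note that the identity is stated for arbitrary $v\in\HuO$, not just $v\in V$, so I would not invoke the orthogonality conditions \eqref{rr1}--\eqref{rr2} here; they enter later when estimating the residual.
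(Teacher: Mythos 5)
Your proof follows essentially the same route as the paper's: the identity $a(e,v)=\l b(w,v)-a(w_h,v)$, insertion of $\PiK w_h$ on each element, elementwise integration by parts of $a^\E(\PiK w_h,v)$, regrouping of the element-boundary terms into edge residuals, and the final addition and subtraction of $\l_h b(w_h,v)$. The one delicate point you flag but do not fully resolve --- that the overall minus sign in front of $\sum_{\E}\int_{\partial\E}\partial_n(\PiK w_h)\,v$ makes the interior-edge contribution come out as $-[\![\partial_n(\PiK w_h)]\!]_\ell$ rather than $+2J_\ell$ with the stated $J_\ell=+\frac{1}{2}[\![\cdot]\!]_\ell$ --- is present in exactly the same form in the paper's own derivation and is harmless, since only $\|J_\ell\|_{0,\ell}$ enters the estimator.
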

\begin{proof} Using that $(\l,w)$ is a solution of Problem~\ref{P1},
adding and subtracting $\PiK w_h$ and integrating by parts, we obtain 
\begin{align*}
a(e,v)&=\l b(w,v)-a(w_h,v)
\\
&=\l b(w,v)
-\sum_{\E\in\CT_h}\left[a^\E(w_h-\PiK w_h,v)+a^\E(\PiK w_h,v)\right]
\\
&=\l b(w,v)-\sum_{\E\in \CT_h}a^\E(w_h-\PiK w_h,v)
-\sum_{\E\in \CT_h}\left[-\int_{\E}\Delta(\PiK w_h)\,v
+\int_{\partial \E}\dfrac{\partial(\PiK w_{h})}
{\partial{n}}\,v\right]
\\
&=\l b(w,v)-\sum_{\E\in\CT_h}a^\E(w_h-\PiK w_h,v)
\\
&\quad +\sum_{\E\in\CT_h}\left[\int_{\E}\Delta(\PiK w_h)\,v
-\!\!\!\!\!\!\sum_{\ell\in\CE_{\E}\cap(\CE_{\Go}\cup\CE_{\G_1})}
\int_{\ell}\dfrac{\partial(\PiK w_{h})}{\partial{n}}\,v
+\dfrac{1}{2}\sum_{\ell\in\CE_{\E}\cap\CE_{\O}}
\int_{\ell}\left[\!\!\left[\dfrac{\partial(\PiK w_{h})}
{\partial{n}}\right]\!\!\right]_{\ell}v\right].
\end{align*}
Finally, the proof follows by adding and subtracting the term $\l_h b(w_h,v)$.
\end{proof}
For all $\E\in \CT_{h}$, we introduce the  local terms
$\theta_{\E}$ and $R_{\E}$ and the local error indicator $\eta_{\E}$ by
\begin{align*}
%\label{RK}
\theta_{\E}^2&:=a_h^\E(w_h-\PiK  w_h,w_h-\PiK  w_h),\\
%\label{etaK1}
R_{\E}^{2}&:=h_{\E}^{2}\|\Delta (\PiK  w_h)\|_{0,\E}^{2},\\
%\label{etaK}
\eta_{\E}^{2}&:= \theta_{\E}^2+R_{\E}^{2}+\sum_{\ell \in \CE_{\E}}h_\E\|J_{\ell}\|_{0,\ell}^2.
\end{align*}
We also introduce  the global error estimator by
\begin{align*}
%\label{etacald}
\eta^{2}&:=\sum_{\E\in \CT_h}\eta_{\E}^2.     
\end{align*}

\begin{remark}
\label{RKK}
The indicators $\eta_{\E}$ include the terms $\theta_{\E}$ which do not appear in standard finite element estimators. This term, which represent the virtual
inconsistency of the method, has been  introduced
in \cite{BMm2as,CGPS} for a posteriori error estimates of other VEM. Let us emphasize that it can be directly computed in terms of the bilinear form $S^{\E}(\cdot,\cdot)$.
In fact, $$\theta_{\E}^2=a_h^\E(w_h-\PiK  w_h,w_h-\PiK  w_h)=S^\E(w_h-\PiK  w_h,w_h-\PiK  w_h).$$
\end{remark}

\subsection{Reliability of the a posteriori error estimator}

First, we  provide an upper bound for the error.

\begin{theorem}
\label{erroipo}
There exists a constant $C>0$ independent of $h$ such that
\begin{align*}
|w- w_h|_{1,\O} &\leq C\left(\eta+\dfrac{\l+\l_h}{2}\|w-w_{h}\|_{0,\G_0}\right).
\end{align*}
\end{theorem}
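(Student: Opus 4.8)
The plan is to derive the bound from the error equation in Lemma~\ref{ext2} by testing it with $v=e\in V$, so that the left-hand side becomes $a(e,e)=|e|_{1,\O}^2$. First I would rewrite the eigenvalue terms: since $\l b(w,v)-\l_h b(w_h,v)$ must be re-expressed in terms of the computable residuals, I would use $\l b(w,v)-\l_h b(w_h,v) = \l_h b(w-w_h,v) + (\l-\l_h)b(w,v)$, or more symmetrically group it so that the factor $\tfrac{\l+\l_h}{2}$ appears; the $b(w-w_h,\cdot)$ contribution will be controlled by $\|w-w_h\|_{0,\Go}\|v\|_{0,\Go}\lesssim\|w-w_h\|_{0,\Go}\|v\|_{1,\O}$ via trace inequality, which produces the second term in the statement. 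Note also that the boundary integrals $\int_\ell J_\ell v$ over $\ell\in\CE_{\Go}$ already absorb the $\l_h w_h$ part of the residual, so the bookkeeping must be done carefully to avoid double counting — the cleanest route is to take the identity of Lemma~\ref{ext2} exactly as written and only manipulate the $\l b(w,v)-\l_h b(w_h,v)$ prefix.

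Next I would bound each of the three families of terms on the right-hand side of Lemma~\ref{ext2}, with $v$ replaced by $v-v_I$ for a suitable quasi-interpolant $v_I\in\Vh$ where Galerkin-type orthogonality can be exploited, or — since the identity in Lemma~\ref{ext2} holds for arbitrary $v\in\HuO$ and the right-hand side is not obviously orthogonal to $\Vh$ — simply estimate directly. The volumetric residual term $\sum_\E\int_\E\Delta(\PiK w_h)v$ is bounded by $\sum_\E h_\E\|\Delta(\PiK w_h)\|_{0,\E}\cdot h_\E^{-1}\|v\|_{0,\E}$; after a Poincaré/Friedrichs-type scaling argument on each element (using that for the reliability estimate we may replace $v$ by $v-$(its average), exploiting that the oscillation terms combine), this gives $\bigl(\sum_\E R_\E^2\bigr)^{1/2}|v|_{1,\O}$. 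The edge-residual term $\sum_\E\sum_{\ell\in\CE_\E}\int_\ell J_\ell v$ is handled by a scaled trace inequality $\|v\|_{0,\ell}\lesssim h_\E^{-1/2}\|v\|_{0,\E}+h_\E^{1/2}|v|_{1,\E}$, producing $\bigl(\sum_\ell h_\E\|J_\ell\|_{0,\ell}^2\bigr)^{1/2}|v|_{1,\O}$. The virtual-inconsistency term $\sum_\E a^\E(w_h-\PiK w_h,v)$ is controlled by Cauchy–Schwarz as $\sum_\E|w_h-\PiK w_h|_{1,\E}|v|_{1,\E}$, and then $|w_h-\PiK w_h|_{1,\E}^2=a^\E(w_h-\PiK w_h,w_h-\PiK w_h)\lesssim S^\E(w_h-\PiK w_h,w_h-\PiK w_h)=\theta_\E^2$ by the stability bound \eqref{20} applied to $w_h-\PiK w_h$ (which satisfies $\PiK(w_h-\PiK w_h)=0$), so this family contributes $\bigl(\sum_\E\theta_\E^2\bigr)^{1/2}|v|_{1,\O}$.

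Collecting the three contributions gives $|e|_{1,\O}^2\le C\eta\,|e|_{1,\O} + C\tfrac{\l+\l_h}{2}\|w-w_h\|_{0,\Go}\,|e|_{1,\O}$ (after using the generalized Poincaré inequality \eqref{poincare1} to pass from $\|e\|_{1,\O}$ to $|e|_{1,\O}$ wherever full norms appeared, and the trace inequality on $\Go$), and dividing by $|e|_{1,\O}$ yields the claimed estimate. The main obstacle I anticipate is the treatment of the volumetric and edge residual terms: unlike in the standard FEM setting, there is no exact Galerkin orthogonality available to subtract a discrete function $v_I$ freely, because $a_h(\cdot,\cdot)\ne a(\cdot,\cdot)$ on $\Vh$; one must instead either accept the loss and estimate crudely (which still works since $\Delta(\PiK w_h)$ and the jumps $J_\ell$ are genuinely small residuals), or carefully construct an interpolant and track the consistency error — and checking that the scaled trace and Poincaré constants are uniform requires invoking the shape-regularity of the auxiliary triangulation $\hCT_h$ guaranteed by assumptions \textbf{A1}--\textbf{A3}. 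A secondary technical point is ensuring the $\tfrac{\l+\l_h}{2}$ prefactor emerges cleanly rather than separate $\l$ and $\l_h$ terms, which is just a matter of symmetrically regrouping $\l b(w,e)-\l_h b(w_h,e)$.
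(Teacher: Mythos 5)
Your overall architecture is right (test the identity of Lemma~\ref{ext2}, isolate the eigenvalue terms, bound the volumetric, edge and inconsistency residuals separately), but there is a genuine gap in the step you yourself flag as the main obstacle, and the fallback you propose does not work. If you test Lemma~\ref{ext2} directly with $v=e$, the volumetric term only gives $\sum_{\E}\|\Delta(\PiK w_h)\|_{0,\E}\|e\|_{0,\E}$ with no factor $h_\E$, and your suggested fix --- replacing $e$ by $e$ minus its element average --- is not legitimate, because $\Delta(\PiK w_h)$ has no reason to be orthogonal to constants on $\E$; likewise the scaled trace inequality applied to $e$ itself leaves an uncontrolled $h_\E^{-1/2}\|e\|_{0,\E}$ in the edge terms. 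The $h_\E$ and $h_\E^{1/2}$ weights defining $R_\E$ and the jump contributions can only be produced by pairing the residuals with $e-e_I$ for an interpolant $e_I\in\Vh$ with $\|e-e_I\|_{0,\E}+h_\E|e-e_I|_{1,\E}\le Ch_\E\|e\|_{1,\E}$ (this is \cite[Proposition 4.2]{MRR2015}, quoted as \eqref{cotainter} in the paper). So the interpolation route is not optional.

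The way to use it without exact Galerkin orthogonality --- the point you were unsure about --- is to split at the outset
$|e|_{1,\O}^2=a(e,e-e_I)+a(w,e_I)-a_h(w_h,e_I)+\big(a_h(w_h,e_I)-a(w_h,e_I)\big)$,
apply Lemma~\ref{ext2} only to $a(e,e-e_I)$, and replace $a(w,e_I)-a_h(w_h,e_I)$ by $\l b(w,e_I)-\l_h b(w_h,e_I)$ using the continuous and discrete eigenvalue equations. The price is the consistency defect $T_4:=a_h(w_h,e_I)-a(w_h,e_I)$, which is entirely absent from your proposal; it is bounded by $C\big(\sum_\E\theta_\E^2\big)^{1/2}|e|_{1,\O}$ after adding and subtracting $\PiK w_h$ and invoking the $k$-consistency \eqref{consistencia} together with the stability \eqref{24} --- this is a second, independent source of the $\theta_\E$ terms in $\eta$, beyond the $\sum_\E a^\E(w_h-\PiK w_h,\cdot)$ term you do treat correctly. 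Finally, for the eigenvalue term you should commit to the symmetric identity: with $\|w\|_{0,\Go}=\|w_h\|_{0,\Go}=1$ one has exactly $\l b(w,e)-\l_h b(w_h,e)=\tfrac{\l+\l_h}{2}\|e\|_{0,\Go}^2$; your first decomposition leaves a term $(\l-\l_h)b(w,e)$ which, bounded naively, produces a stray $|\l-\l_h|$ not controlled at this stage of the argument (that bound is only derived later, in Corollary~\ref{cotalambda}, and uses the present theorem).
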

\begin{proof}
Since $e=w-w_h\in V\subset \HuO$, there exists $e_I\in \Vh$ satisfying (see \cite[Proposition 4.2]{MRR2015})
\begin{equation}\label{cotainter}
\|e-e_{I}\|_{0,\E}+h_{\E}|e-e_{I}|_{1,\E}\leq Ch_{\E}\|e\|_{1,\E}.
\end{equation} 
Then, we have that
\begin{equation}\label{cotasalto}
\begin{split}
|w- w_h|_{1,\O}^2 &=   a(w- w_h,e)\\
&=a(w-w_h,e-e_I)+a(w,e_I)-a_h(w_h,e_I)+a_h(w_h,e_I)-a( w_h,e_I)\\
&=\underbrace{\l b( w,e)-\l_h b(w_h,e)}_{T_{1}}+\underbrace{\sum_{\E\in \CT_h}\left[\int_\E\Delta (\PiK  w_h)(e-e_I)+\sum_{\ell\in \CE_{\E}}\int_{\ell}J_{\ell}(e-e_I)\right]}_{T_{2}}\\
&-\underbrace{\sum_{\E\in \CT_h}a^{\E}( w_h-\PiK  w_h, e-e_I)}_{T_{3}}+\underbrace{a_h(w_h,e_I)-a( w_h,e_I)}_{T_{4}},
\end{split}
\end{equation}
the last equality thanks to Lemma~\ref{ext2}.
Next, we bound each term $T_{i}$ separately.

For $T_1$, we use the definition of $b(\cdot,\cdot)$,
the fact that  $\|w\|_{0,\G_0}=\|w_h\|_{0,\G_0}=1$, a trace theorem and \eqref{poincare1} to write
\begin{align}\label{cota3}
T_{1}&=\l+\l_h-(\l+\l_h)\int_{\Go} ww_h=\dfrac{\l+\l_h}{2}\|e\|_{0,\G_0}^2\leq C\dfrac{\l+\l_h}{2}\|e\|_{0,\G_0}|e|_{1,\O}.
\end{align}

For $T_2$, first, we use a local trace inequality
(see \cite[Lemma~14]{BMRR}) and  \eqref{cotainter} to write
\begin{align*}
\|e-e_I\|_{0,\ell}&\leq C\!\left( h_\E^{-1/2}\|e-e_I\|_{0,\E}+h_\E^{1/2}|e-e_I|_{1,\E}\right)\leq Ch_{\E}^{1/2}\|e\|_{1,\E}.
\end{align*}
Hence, using \eqref{cotainter} again, we have
\begin{align}\label{cot}
\nonumber
T_{2}&\leq C\sum_{\E\in \CT_h}\left[\|\Delta (\PiK  w_h)\|_{0,\E}\|e-e_I\|_{0,\E}+\sum_{\ell \in \CE_{\E}}\|J_{\ell}\|_{0,\ell}\|e-e_I\|_{0,\ell}\right]\\\nonumber
&\leq C\sum_{\E\in \CT_h}\left[h_\E\|\Delta (\PiK  w_h)\|_{0,\E}\|e\|_{1,\E}+\sum_{\ell \in \CE_{\E}}h_\E^{1/2}\|J_{\ell}\|_{0,\ell}\|e\|_{1,\E}\right]\\
&\leq C\left\{\sum_{\E\in \CT_h}\left[h_\E^2\|\Delta (\PiK  w_h)\|_{0,\E}^2+\sum_{\ell \in \CE_{\E}}h_\E\|J_{\ell}\|_{0,\ell}^2\right]\right\}^{1/2}|e|_{1,\O},
\end{align}
where for the last estimate we have used  \eqref{poincare1}.

To bound  $T_3$, we use the \textit{stability} property \eqref{24}
and \eqref{cotainter} to write
\begin{align}\label{cota5}
T_{3}& \leq C\sum_{\E\in \CT_h}a_h^\E(w_h-\PiK  w_h,w_h-\PiK  w_h)^{1/2}\|e\|_{1,\E} \leq C \left(\sum_{\E\in \CT_h}\theta_{\E}^{2}\right)^{1/2}|e|_{1,\O},
\end{align}
where for the last estimate we have used Remark \ref{RKK} and \eqref{poincare1} again.

Finally, to bound  $T_4$, we add and subtract $\PiK  w_h$ on each $K\in\CT_h$
and use the  \textit{$k$-consistency} property \eqref{consistencia}:
\begin{align}\label{cota4}
\nonumber
T_{4}&=\sum_{\E\in \CT_h}\big[ a_h^\E(w_h-\PiK  w_h,e_I)-a^\E(w_{h}-\PiK  w_h,e_I)\big]\\\nonumber
& \leq\sum_{\E\in \CT_h}a_h^\E(w_h-\PiK  w_h,w_h-\PiK  w_h)^{1/2}a_{h}^\E(e_{I},e_I)^{1/2}\\\nonumber
&\quad+\sum_{\E\in \CT_h}a^\E(w_h-\PiK  w_h,w_h-\PiK  w_h)^{1/2}a^\E(e_{I},e_I)^{1/2}\\\nonumber
& \leq C\sum_{\E\in \CT_h}a_h^\E(w_h-\PiK  w_h,w_h-\PiK  w_h)^{1/2}|e_{I}|_{1,\E}\\
& \leq C \left(\sum_{\E\in \CT_h}\theta_{\E}^{2}\right)^{1/2}|e|_{1,\O},
\end{align}
where we have used the \textit{stability} property  \eqref{24}, \eqref{cotainter}
and \eqref{poincare1} for the last two inequalities. 
 
Thus, the result follows from \eqref{cotasalto}--\eqref{cota4}.
\end{proof}

Although the virtual approximate eigenfunction is $w_{h}$, this function is not known in practice. Instead of $w_{h}$, what can be used as an approximation of the eigenfunction is $\Pi_h w_{h}$, where $\Pi_h$ is  defined for $v_{h}\in V_{h}$ by
\begin{equation*}
(\Pi_h v_{h})|_{\E}:=\PiK v_{h}\quad\forall\E\in\CT_h.
\end{equation*}
Notice that $\Pi_h w_{h}$ is actually computable.
The following result shows that an estimate similar to that of Theorem \ref{erroipo} holds true for $\Pi_h w_{h}$.
\begin{corollary}
\label{corolario2}
There exists a constant $C>0$ independent of $h$ such that
\begin{equation*}
 |w-w_h|_{1,\O}+|w-\Pi_h  w_{h}|_{1,h}\leq C\left( \eta+ \dfrac{\l+\l_h}{2}\|w-w_{h}\|_{0,\G_0}\right).
\end{equation*}
 \end{corollary}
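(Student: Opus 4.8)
The plan is to derive the bound for $|w-\Pi_h w_h|_{1,h}$ from Theorem~\ref{erroipo} by a triangle inequality, splitting the broken seminorm elementwise as
\[
|w-\Pi_h w_h|_{1,h}\le |w-w_h|_{1,h}+|w_h-\Pi_h w_h|_{1,h}=|w-w_h|_{1,\O}+\Big(\sum_{\E\in\CT_h}|w_h-\PiK w_h|_{1,\E}^2\Big)^{1/2}.
\]
The first term is already controlled by Theorem~\ref{erroipo}. The task therefore reduces to bounding the second term, the total "projection defect", by $C\eta$.

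The key step is to recognize that $|w_h-\PiK w_h|_{1,\E}^2=a^\E(w_h-\PiK w_h,w_h-\PiK w_h)$, and that this is equivalent to $\theta_\E^2$. Indeed, by the definition of $\PiK$ one checks that $\PiK(w_h-\PiK w_h)=0$ (since $\PiK$ is a projection onto $\P_k(\E)$ in the sense of the two defining conditions), so the stability bound \eqref{24} for $a_h^\E$ applies to $v_h:=w_h-\PiK w_h$, giving
\[
\alpha_* a^\E(w_h-\PiK w_h,w_h-\PiK w_h)\le a_h^\E(w_h-\PiK w_h,w_h-\PiK w_h)=\theta_\E^2.
\]
Hence $|w_h-\PiK w_h|_{1,\E}\le \alpha_*^{-1/2}\theta_\E$, and summing over $\E\in\CT_h$ yields $\big(\sum_\E |w_h-\PiK w_h|_{1,\E}^2\big)^{1/2}\le C\big(\sum_\E\theta_\E^2\big)^{1/2}\le C\eta$.

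Combining the two estimates, $|w-\Pi_h w_h|_{1,h}\le |w-w_h|_{1,\O}+C\eta$, and then adding the bound of Theorem~\ref{erroipo} for $|w-w_h|_{1,\O}$ itself, we conclude
\[
|w-w_h|_{1,\O}+|w-\Pi_h w_h|_{1,h}\le C\Big(\eta+\tfrac{\l+\l_h}{2}\|w-w_h\|_{0,\G_0}\Big),
\]
as claimed. The only mildly delicate point — the main obstacle, such as it is — is verifying that $\PiK(w_h-\PiK w_h)=0$ so that the stability inequality \eqref{24} is legitimately applicable; this follows because $\PiK q=q$ for $q\in\P_k(\E)$ (the defining conditions are satisfied with equality when the argument is already a polynomial of degree $\le k$), together with linearity of $\PiK$. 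Everything else is a routine triangle inequality and a Cauchy–Schwarz over the elements. No new mesh-regularity input or trace inequality is needed beyond what was already used for Theorem~\ref{erroipo}.
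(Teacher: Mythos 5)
Your proof is correct and follows essentially the same route as the paper: a triangle inequality reducing everything to the projection defect $|w_h-\PiK w_h|_{1,\E}$, which is then bounded by $\theta_\E$ and absorbed into $\eta$ before invoking Theorem~\ref{erroipo}. The only cosmetic difference is that you control $a^{\E}(w_h-\PiK w_h,w_h-\PiK w_h)$ via the stability property \eqref{24} (which holds for all $v_h\in\VK$, so your check that $\PiK(w_h-\PiK w_h)=0$ is not actually needed there), whereas the paper uses \eqref{20} together with Remark~\ref{RKK}; the two are interchangeable here.
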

\begin{proof}
For each polygon $\E\in\CT_h$,  we have that
\begin{align*}
|w-\PiK  w_{h}|_{1,\E}\leq |w- w_h|_{1,\E}+|w_h-\PiK  w_h|_{1,\E}.
\end{align*}
Then, summing over all polygons we obtain 
\begin{align*}
|w-\Pi_h  w_{h}|_{1,h}
&\leq C\left( \sum_{\E\in \CT_h}|w- w_h|_{1,\E}^2+\sum_{\E\in \CT_h}|w_h-\PiK  w_h|_{1,\E}^2\right)^{1/2}.
\end{align*} 
Now, using \eqref{20} together with Remark~\ref{RKK},
we have that 
$$|w_h-\PiK  w_h|_{1,\E}^{2}\leq \dfrac{1}{c_{0}}S^\E(w_h-\PiK  w_h,w_h-\PiK  w_h)= \dfrac{1}{c_{0}}\theta_{\E}^{2}\leq \dfrac{1}{c_{0}}\eta_{\E}^{2}.$$
Thus, the result follows from Theorem~\ref{erroipo}.
\end{proof}

In what follows, we prove a convenient upper bound for the eigenvalue approximation. 
\begin{corollary}
\label{cotalambda}
There exists a constant $C>0$ independent of $h$ such that
 \begin{align*}
|\l-\l_h|\leq C\left( \eta+ \dfrac{\l+\l_h}{2}\|w-w_{h}\|_{0,\G_0}\right)^{2}.
  \end{align*} 
  \end{corollary}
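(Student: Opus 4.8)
The plan is to derive the eigenvalue error identity that is standard for symmetric eigenvalue problems and then feed into it the $H^1$-bound already obtained in Corollary~\ref{corolario2}. Recall that $w$ and $w_h$ are normalized so that $\|w\|_{0,\Go}=\|w_h\|_{0,\Go}=1$, i.e. $b(w,w)=b(w_h,w_h)=1$, and both belong to $V$. The first step is to write, using $a(w,v)=\l b(w,v)$ for all $v\in\HuO$ and the symmetry of $a(\cdot,\cdot)$ and $b(\cdot,\cdot)$, the classical identity
\begin{equation*}
a(w-w_h,w-w_h)-\l\, b(w-w_h,w-w_h)=\l_h b(w_h,w_h)-\l b(w_h,w_h)=\l-\l_h,
\end{equation*}
where I also use $a_h(w_h,w_h)=\l_h b(w_h,w_h)=\l_h$; strictly speaking one must be a little careful because the consistency term forces me to compare $a(w_h,w_h)$ with $a_h(w_h,w_h)$, so the precise version I would record is
\begin{equation*}
\l-\l_h=|w-w_h|_{1,\O}^2-\l\|w-w_h\|_{0,\Go}^2+\bigl(a_h(w_h,w_h)-a(w_h,w_h)\bigr).
\end{equation*}

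The second step is to bound each of the three terms on the right-hand side by the square of $\bigl(\eta+\tfrac{\l+\l_h}{2}\|w-w_h\|_{0,\Go}\bigr)$. The term $|w-w_h|_{1,\O}^2$ is controlled directly by Corollary~\ref{corolario2} (or already by Theorem~\ref{erroipo}). The term $\l\|w-w_h\|_{0,\Go}^2$ is trivially bounded by a constant times $\bigl(\tfrac{\l+\l_h}{2}\|w-w_h\|_{0,\Go}\bigr)^2$. The consistency remainder $a_h(w_h,w_h)-a(w_h,w_h)$ is handled exactly as the term $T_4$ in the proof of Theorem~\ref{erroipo}: adding and subtracting $\PiK w_h$ elementwise and invoking $k$-consistency \eqref{consistencia} and stability \eqref{24} gives
\begin{equation*}
\bigl|a_h(w_h,w_h)-a(w_h,w_h)\bigr|
\le C\sum_{\E\in\CT_h}\theta_\E^2\le C\,\eta^2,
\end{equation*}
since both $a_h^\E(w_h-\PiK w_h,\PiK w_h)$ and $a^\E(w_h-\PiK w_h,\PiK w_h)$ vanish by consistency, leaving only the stabilization contribution $S^\E(w_h-\PiK w_h,w_h-\PiK w_h)=\theta_\E^2$.

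Combining the three bounds yields $|\l-\l_h|\le C\bigl(\eta+\tfrac{\l+\l_h}{2}\|w-w_h\|_{0,\Go}\bigr)^2$, which is the claim. I do not expect any serious obstacle here; the only point requiring care is the bookkeeping in the eigenvalue identity — making sure the $a_h$ versus $a$ discrepancy is tracked honestly and that the cross terms in the expansion of $a(w-w_h,w-w_h)$ and $b(w-w_h,w-w_h)$ really do collapse, which they do precisely because $w$ solves Problem~\ref{P1} tested against $v=w_h\in\HuO$. Everything else is a reuse of estimates already established in the section.
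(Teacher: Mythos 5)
Your proof is correct and follows essentially the same route as the paper: the symmetric eigenvalue error identity $\l_h-\l=a(w-w_h,w-w_h)-\l\, b(w-w_h,w-w_h)+a_h(w_h,w_h)-a(w_h,w_h)$ (your version has the sign of $\l-\l_h$ reversed, which is harmless under the absolute value), followed by Theorem~\ref{erroipo}/Corollary~\ref{corolario2} for the $H^1$ term and the $k$-consistency/stability argument for the $a_h$ versus $a$ discrepancy. The only cosmetic differences are that the paper absorbs $\|w-w_h\|_{0,\G_0}^2$ into $|w-w_h|_{1,\O}^2$ via a trace inequality and \eqref{poincare1}, and routes the consistency remainder through $|w-\Pi_h w_h|_{1,h}^2$ (the intermediate estimate \eqref{cfutu}, which is reused later in Lemma~\ref{asintotico}) rather than bounding it directly by $C\eta^2$ as you do.
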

  \begin{proof}
From the symmetry of the bilinear forms together with the facts
that $a(w,v)=\l b(w,v)$ for all $v\in \HuO$, $a_h(w_h,v_h)=\l_h b(w_h,v_h)$
for all $v_h\in \Vh$ and $b(w_h,w_h)=1$, we have
\begin{align}\label{lamdass}
\nonumber
|\l-\l_h|&=\dfrac{|a(w-w_h,w-w_h)-\l b(w-w_h,w-w_h)+a_h(w_h,w_h)-a(w_h,w_h)|}{b(w_h,w_h)}\\\nonumber
&\leq C\left[|w-w_h|_{1,\O}^2+\|w-w_h\|_{0,\G_0}^2+|a_h(w_h,w_h)-a(w_h,w_h)|\right]\\
&\leq C\left[|w-w_h|_{1,\O}^2+|a_h(w_h,w_h)-a(w_h,w_h)|\right],
\end{align}
where we have also used a trace theorem and \eqref{poincare1}. We now bound the last
term on the right-hand side above using the definition of
$a_{h}(\cdot,\cdot)$ and  \eqref{20}:
\begin{align*}
& \left|a_h(w_h,w_h)-a(w_h,w_h)\right|
\\
& \,\,
=\left|\sum_{\E\in\CT_h}\left[a^\E(\PiK w_h,\PiK w_h)
+S^\E\big(w_h-\PiK w_h,w_h-\PiK w_h\big)\right]
-\sum_{\E\in\CT_h}a^\E(w_h,w_h)\right|
\\
& \,\,
\leq\left|\sum_{\E\in\CT_h}
\left[a^\E\big(\PiK w_h,\PiK w_h\big)
-a^\E(w_h,w_h)\right]\right|+\sum_{\E\in\CT_h}
c_1\,a^\E\big(w_h-\PiK w_h,w_h-\PiK w_h\big)
\\
& \,\,
=\sum_{\E\in\CT_h}\left(1+c_1\right)
a^\E\big(w_h-\PiK w_h,w_h-\PiK w_h\big)\\
& \leq \left(1+c_1\right)\sum_{\E\in\CT_h}\left(\left|w_h-w\right|_{1,\E}^{2}
+\left|w-\PiK w_{h}\right|_{1,\E}^{2}\right).
\end{align*}
Finally, from the above estimate and \eqref{lamdass}  we obtain
\begin{align}
\label{cfutu}
|\l-\l_h|&\leq C\left(|w-w_h|_{1,\O}^{2}+|w-\Pi_h  w_{h}|_{1,h}^{2}\right).
\end{align} 
Hence, we conclude the proof thanks to Corollary~\ref{corolario2}.
\end{proof}
According to \eqref{eq29} and \eqref{eq293}, it seems reasonable to expect the term $\|w-w_{h}\|_{0,\G_0}$
in the estimate of Theorem~\ref{erroipo} to be of higher order than $|w-w_{h}|_{1,\O}$ and hence asymptotically negligible. However this cannot be rigorously  derived from \eqref{eq29} and \eqref{eq293}, which are only upper error bounds. In fact, the actual error $|w-w_{h}|_{1,\O}$ could be in principle of higher order than the estimate \eqref{eq29}.  
  
Our next goal is to prove that the term $\|w-w_{h}\|_{0,\G_0}$ is actually asymptotically negligible in the estimates of Corollaries \ref{corolario2} and \ref{cotalambda}. With this aim, we will modify the estimate \eqref{eq293} and prove that 
\begin{equation}
\label{ro1}
\|w-w_{h}\|_{0,\G_{0}}\leq Ch^{\min\{r,1\}/2}\left(|w-w_{h}|_{1,\O}+|w-\Pi_hw_{h}|_{1,h}\right).
\end{equation}
This proof is based on the arguments used in Section 4 from \cite{MRR2015}. To avoid repeating them step by step, in what follows we will only report the changes that have to be made in order to prove \eqref{ro1}.

We define in $\HuO$ the bilinear form $\widehat{a}(\cdot,\cdot):=a(\cdot,\cdot)+b(\cdot,\cdot)$, which is elliptic \cite[Lemma 2.1]{MRR2015}.
Let $u\in \HuO$ be the solution of 
$$\widehat{a}(u,v)=b(w,v)\qquad \forall v\in \HuO.$$ 
Since $a(w,v)=\l b(w,v)$ we have that $u=w/(\l+1).$
We also define in $V_{h}$ the bilinear form  $\widehat{a}_{h}(\cdot,\cdot):=a_{h}(\cdot,\cdot)+b(\cdot,\cdot)$, which is elliptic uniformly in $h$ \cite[Lemma 3.1]{MRR2015}. 
Let $u_{h}\in V_{h}$ be the solution of 
\begin{equation}
\label{ro2}
\widehat{a}_{h}(u_{h},v_{h})=b(w,v_{h})\qquad \forall v_{h}\in V_{h}.
\end{equation}
The arguments in the proof of Lemma 4.3 from \cite{MRR2015} can be easily modified to prove that 
$$\|u-u_{h}\|_{0,\G_{0}}\leq Ch^{\min\{r,1\}/2}\left(|u-u_{h}|_{1,\O}+|u-\Pi_{h} u_{h}|_{1,h}\right).$$
Then, using this estimate in the proof of  Theorem 4.4 from \cite{MRR2015} yields 
\begin{equation}
\label{ro3}
\|w-w_{h}\|_{0,\G_{0}}\leq Ch^{\min\{r,1\}/2}\left(|u-u_{h}|_{1,\O}+|u-\Pi_{h} u_{h}|_{1,h}\right).
\end{equation}
Now, since as stated above $u=w/(\l+1)$, we have that
\begin{equation}
\label{ro4}
|u-u_{h}|_{1,\O}\leq \dfrac{|w-w_{h}|_{1,\O}}{|\l+1|}+\left|\dfrac{1}{\l+1}-\dfrac{1}{\l_{h}+1}\right||w_{h}|_{1,\O}+\left|\dfrac{w_{h}}{\l_{h}+1}-u_{h}\right|_{1,\O}.
\end{equation}
For the second term on the right hand side above, we use \eqref{cfutu} to write 
\begin{align}
\label{ro5}
\left|\dfrac{1}{\l+1}-\dfrac{1}{\l_{h}+1}\right|=\dfrac{|\l-\l_{h}|}{|\l+1||\l_{h}+1|}&\leq C \left(|w-w_{h}|_{1,\O}^{2}+|w-\Pi_h w_{h}|_{1,h}^{2}\right).
\end{align}
To estimate the third term we recall first that
$$\widehat{a}_{h}(w_{h},v_{h})=(\l_{h}+1)b(w_{h},v_{h})\qquad \forall v_{h}\in V_{h}.$$
Then, subtracting this equation divided by $\l_{h}+1$ from \eqref{ro2} we have that 
$$\widehat{a}_{h}\!\left(u_{h}-\dfrac{w_{h}}{\l_{h}+1},v_{h}\right)=b(w-w_{h},v_{h})\qquad \forall v_{h}\in V_{h}.$$
Hence, from the uniform ellipticity of $\widehat{a}_{h}(\cdot,\cdot)$ in $V_{h}$, we obtain
\begin{align*}
\left\|u_{h}-\dfrac{w_{h}}{\l_{h}+1}\right\|_{1,\O}^{2}&\leq C \|w-w_{h}\|_{0,\G_{0}}\left\|u_{h}-\dfrac{w_{h}}{\l_{h}+1}\right\|_{0,\G_{0}}\leq C \|w-w_{h}\|_{0,\G_{0}}\left\|u_{h}-\dfrac{w_{h}}{\l_{h}+1}\right\|_{1,\O}.
\end{align*}
Therefore
\begin{align}
\label{ro6}
\left\|u_{h}-\dfrac{w_{h}}{\l_{h}+1}\right\|_{1,\O}&\leq C \|w-w_{h}\|_{0,\G_{0}}\leq C \|w-w_{h}\|_{1,\O}\leq C |w-w_{h}|_{1,\O},
\end{align}
the last inequality because of Poincar\'e inequality \eqref{poincare1}.
Then, substituting \eqref{ro5} and  \eqref{ro6} into \eqref{ro4} we obtain 
\begin{align}
\label{ro7}
|u-u_{h}|_{1,\O}\leq C\left(|w-w_{h}|_{1,\O}+|w-\Pi_{h} w_{h}|_{1,h}\right).
\end{align}

For the other term on the right hand side of \eqref{ro3} we have
\begin{align}
\label{roex}
|u-\Pi_{h}u_{h}|_{1,h}\leq |u-u_{h}|_{1,\O}+|u_{h}-\Pi_{h}u_{h}|_{1,h},
\end{align}
whereas  
\begin{align*}
|u_{h}-\Pi_{h}u_{h}|_{1,h}&\leq \left|u_{h}-\dfrac{w_{h}}{\l_{h}+1}\right|_{1,\O}+\dfrac{|w_{h}-\Pi_{h}w_{h}|_{1,h}}{\l_{h}+1}+\left|\Pi_{h}\left(\dfrac{w_{h}}{\l_{h}+1}-u_{h}\right)\right|_{1,h}\\
&\leq  2\left|u_{h}-\dfrac{w_{h}}{\l_{h}+1}\right|_{1,\O}+\dfrac{|w-w_{h}|_{1,\O}}{\l_{h}+1}+\dfrac{|w-\Pi_{h}w_{h}|_{1,h}}{\l_{h}+1}\\
&\leq C \left(|w-w_{h}|_{1,\O}+|w-\Pi_{h}w_{h}|_{1,h}\right),
\end{align*}
where we have used \eqref{ro6} for the last inequality. Substituting this and estimate \eqref{ro7} into \eqref{roex}  we obtain
$$|u-\Pi_{h}u_{h}|_{1,h}\leq C \left(|w-w_{h}|_{1,\O}+|w-\Pi_{h}w_{h}|_{1,h}\right).$$
Finally, substituting the above estimate and \eqref{ro7} into \eqref{ro3}, we conclude the proof of the following result.
\begin{lemma}
\label{asintotico}
There exists $C>0$ independent of $h$ such that
\begin{equation*}
\|w-w_{h}\|_{0,\G_{0}}\leq Ch^{\min\{r,1\}/2}\left(|w-w_{h}|_{1,\O}+|w-\Pi_hw_{h}|_{1,h}\right).
\end{equation*}
\end{lemma}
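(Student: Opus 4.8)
The plan is to recover the improved $L^2(\Go)$ estimate by revisiting the Aubin--Nitsche type duality argument of \cite{MRR2015}, but applied to an auxiliary \emph{source} problem and stopped before the a priori convergence rates are invoked, so that the gained power of $h$ is isolated while the remaining factor is expressed through $|w-w_h|_{1,\O}$ and $|w-\Pi_h w_h|_{1,h}$. To this end I would first introduce the shifted forms $\widehat{a}:=a+b$ on $\HuO$ (which is $\HuO$-elliptic) and $\widehat{a}_h:=a_h+b$ on $\Vh$ (which is elliptic uniformly in $h$), both established in \cite{MRR2015}. Let $u\in\HuO$ solve $\widehat{a}(u,v)=b(w,v)$ for all $v\in\HuO$ and $u_h\in\Vh$ solve $\widehat{a}_h(u_h,v_h)=b(w,v_h)$ for all $v_h\in\Vh$. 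Since $a(w,\cdot)=\l\, b(w,\cdot)$, one gets the explicit identity $u=w/(\l+1)$, which is the bridge that will let me pass from estimates on $u-u_h$ back to $w-w_h$.

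The core step is to adapt the proof of \cite[Lemma~4.3 and Theorem~4.4]{MRR2015}: testing the dual mixed boundary value problem for the Laplacian (with data supported on $\Go$) against $w-w_h$ and using its $H^{1+\min\{r,1\}}$-regularity produces one factor $h^{\min\{r,1\}/2}$, while the second factor is the consistency/interpolation error of the virtual scheme for the source problem. The only modification needed is to keep that second factor as $|u-u_h|_{1,\O}+|u-\Pi_h u_h|_{1,h}$ rather than bounding it by $h^{\min\{r,k\}}$ via \eqref{eq29}. This yields the intermediate bound $\|w-w_h\|_{0,\Go}\le Ch^{\min\{r,1\}/2}\bigl(|u-u_h|_{1,\O}+|u-\Pi_h u_h|_{1,h}\bigr)$. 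I expect this to be the main obstacle, since it requires going carefully through the duality argument of \cite{MRR2015} to check that the power of $h$ separates cleanly and that no hidden use of the a priori rates slips in.

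It then remains to bound the right-hand side by $|w-w_h|_{1,\O}+|w-\Pi_h w_h|_{1,h}$. Using $u=w/(\l+1)$ and a triangle inequality, $|u-u_h|_{1,\O}$ splits into $|w-w_h|_{1,\O}/|\l+1|$, the term $|1/(\l+1)-1/(\l_h+1)|\,|w_h|_{1,\O}$, and $|w_h/(\l_h+1)-u_h|_{1,\O}$. The first is immediate; the second is controlled using $|\l-\l_h|\le C(|w-w_h|_{1,\O}^2+|w-\Pi_h w_h|_{1,h}^2)$ from \eqref{cfutu} together with the uniform bound $|w_h|_{1,\O}\le C$ coming from $a_h(w_h,w_h)=\l_h$ and the stability \eqref{24}; and the third, after subtracting the shifted discrete eigenvalue equation from the definition of $u_h$, solves $\widehat{a}_h(\cdot,v_h)=b(w-w_h,v_h)$ in $\Vh$, hence is $\le C\|w-w_h\|_{0,\Go}\le C|w-w_h|_{1,\O}$ by uniform ellipticity and \eqref{poincare1}. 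An analogous decomposition of $|u-\Pi_h u_h|_{1,h}$, combined with $|w_h-\Pi_h w_h|_{1,h}^2\le c_0^{-1}\sum_{\E\in\CT_h}\theta_\E^2\le c_0^{-1}\eta^2$ (Remark~\ref{RKK} and \eqref{20}) and the bounds just obtained, gives the same control. Substituting all of this back into the intermediate bound yields the stated estimate.
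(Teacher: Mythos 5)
Your proposal follows essentially the same route as the paper: the same auxiliary source problems for $\widehat{a}$ and $\widehat{a}_h$, the identity $u=w/(\l+1)$, the adaptation of the duality argument of \cite{MRR2015} stopped before the a priori rates are invoked, and the same triangle-inequality decompositions of $|u-u_h|_{1,\O}$ and $|u-\Pi_h u_h|_{1,h}$, with the third term handled by subtracting the scaled discrete eigenvalue equation and using uniform ellipticity. The one step that needs correcting is your treatment of the term $|w_h-\Pi_h w_h|_{1,h}$ arising in the decomposition of $|u_h-\Pi_h u_h|_{1,h}$: bounding it through $\sum_{\E\in\CT_h}\theta_{\E}^{2}\le\eta^{2}$ leaves an extra $\eta$ on the right-hand side, which is not what the lemma asserts and would require a forward appeal to the efficiency bound of Section~3.2 to remove. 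The paper instead uses the plain triangle inequality $|w_h-\Pi_h w_h|_{1,h}\le |w-w_h|_{1,\O}+|w-\Pi_h w_h|_{1,h}$, which yields exactly the stated right-hand side; with that substitution your argument coincides with the paper's proof.
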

Using this result, now it easy to prove that the term $\|w-w_{h}\|_{0,\G_0}$ in Corollaries \ref{corolario2} and \ref{cotalambda} is asymptotically negligible. In fact, we have the following result.
\begin{theorem}
There exist positive constants $C$ and $h_{0}$ such that, for all $h<h_{0}$, there holds
\begin{align}
\label{ro9}
&|w-w_{h}|_{1,\O}+|w-\Pi_{h}w_{h}|_{1,h}\leq C\eta;\\
&|\l-\l_{h}|\leq C \eta^{2}.\label{ro10}
\end{align}
\end{theorem}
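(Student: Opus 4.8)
The plan is to combine the reliability bounds of Corollaries~\ref{corolario2} and \ref{cotalambda} with the superconvergence-type estimate of Lemma~\ref{asintotico}, using a standard absorption (kick-back) argument to remove the boundary term $\|w-w_{h}\|_{0,\G_0}$. Write $E:=|w-w_{h}|_{1,\O}+|w-\Pi_{h}w_{h}|_{1,h}$ for the combined energy error. Corollary~\ref{corolario2} gives $E\le C(\eta+\frac{\l+\l_h}{2}\|w-w_{h}\|_{0,\G_0})$, while Lemma~\ref{asintotico} gives $\|w-w_{h}\|_{0,\G_0}\le Ch^{\min\{r,1\}/2}E$. Since $\l_h\to\l$ as $h\to 0$, the factor $\frac{\l+\l_h}{2}$ is bounded by a constant (say $\l+1$) for all $h$ small enough; call such $h$ those below some $h_{0}$.

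Substituting Lemma~\ref{asintotico} into Corollary~\ref{corolario2} yields, for $h<h_{0}$,
\begin{equation*}
E\le C\eta + C(\l+1)h^{\min\{r,1\}/2}E.
\end{equation*}
Since $\min\{r,1\}/2>0$, by possibly shrinking $h_{0}$ we may assume $C(\l+1)h^{\min\{r,1\}/2}\le\tfrac12$ for all $h<h_{0}$, and absorbing this term into the left-hand side gives $E\le 2C\eta$, which is precisely \eqref{ro9}. For \eqref{ro10}, Corollary~\ref{cotalambda} — or more directly the intermediate estimate \eqref{cfutu}, namely $|\l-\l_h|\le C(|w-w_{h}|_{1,\O}^{2}+|w-\Pi_{h}w_{h}|_{1,h}^{2})\le CE^{2}$ — together with the just-proved bound $E\le C\eta$ yields $|\l-\l_h|\le C\eta^{2}$.

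The only genuinely delicate point is justifying that $\frac{\l+\l_h}{2}$ can be treated as a bounded constant and that the threshold $h_{0}$ can be chosen uniformly: this rests on the convergence $\l_h\to\l$ from the a priori theory recalled after \eqref{eq293} (equivalently \eqref{eq292}), which holds under the standing mesh assumptions \textbf{A1}--\textbf{A3}. Everything else is the routine absorption argument; no new geometric or approximation estimates are needed beyond what is already established. I would therefore present the proof in three short steps: (i) recall $\l_h\to\l$ to fix $h_{0}$ bounding the eigenvalue factor; (ii) feed Lemma~\ref{asintotico} into Corollary~\ref{corolario2}, shrink $h_{0}$ so the $h$-dependent coefficient is $\le\frac12$, and absorb to get \eqref{ro9}; (iii) insert \eqref{ro9} into \eqref{cfutu} to get \eqref{ro10}.
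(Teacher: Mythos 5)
Your proof is correct and follows essentially the same route as the paper: feed Lemma~\ref{asintotico} into Corollary~\ref{corolario2} and absorb the $h^{\min\{r,1\}/2}$-weighted error term for $h$ below a threshold $h_0$, then deduce the eigenvalue bound (the paper passes through Corollary~\ref{cotalambda} while you use \eqref{cfutu} directly, an immaterial difference). Your explicit remark that $\l_h\to\l$ justifies treating $\frac{\l+\l_h}{2}$ as a bounded constant is a detail the paper leaves implicit.
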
 
\begin{proof}
From Lemma \ref{asintotico} and Corollary \ref{corolario2} we have
$$|w-w_{h}|_{1,\O}+|w-\Pi_{h}w_{h}|_{1,h}\leq C\left(\eta+h^{\min\{r,1\}/2}\left(|w-w_{h}|_{1,\O}+|w-\Pi_hw_{h}|_{1,h}\right)\right).$$
Hence, it is straightforward to check that there exists $h_{0}>0$ such that for all $h<h_{0}$ \eqref{ro9} holds true.

On the other hand, from Lemma \ref{asintotico} and \eqref{ro9} we have that for all $h<h_{0}$
$$\|w-w_{h}\|_{0,\G_{0}}\leq Ch^{\min\{r,1\}/2}\eta.$$
Then, for $h$ small enough, \eqref{ro10} follows from Corollary \ref{cotalambda} and the above estimate. 
\end{proof}
\subsection{Efficiency of the a posteriori error estimator}

We will show in this section  that the local error indicators
$\eta_{\E}$  are efficient in the sense of pointing
out which polygons should be effectively refined.

First, we prove an upper estimate of the volumetric residual term $R_{\E}$.
\begin{lemma}
\label{eficiencia1}
There exists a constant $C>0$ independent of $h_\E$ such that
\begin{equation*}
R_{\E}\leq C \left(|w-w_{h}|_{1,\E}+\theta_{\E}\right).
\end{equation*}
\end{lemma}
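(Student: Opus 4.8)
The standard approach for bounding a volumetric residual in residual-type a posteriori analysis is to test the residual against the interior bubble function times the residual itself, and exploit the norm equivalence of Lemma~\ref{burbujainterior}. Here the relevant quantity on each element is $\Delta(\PiK w_h)$, which is a polynomial in $\P_{k-2}(\E)\subset\P_k(\E)$ since $\PiK w_h\in\P_k(\E)$. So the plan is: set $q:=\Delta(\PiK w_h)$, let $\psi_{\E}$ be the interior bubble function, and consider $v:=\psi_{\E}q\in H_0^1(\E)$, extended by zero to all of $\O$. By the first estimate in Lemma~\ref{burbujainterior}, $\|q\|_{0,\E}^2\le C\int_{\E}\psi_{\E}q^2=C\int_{\E}\Delta(\PiK w_h)\,v$.

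Next I would feed $v$ into the error identity of Lemma~\ref{ext2}. Because $v$ vanishes on $\partial\E$ and outside $\E$, all the edge terms $\sum_{\ell}\int_{\ell}J_{\ell}v$ drop out, as does $b(w,v)$ and $b(w_h,v)$ (their supports are on $\Go$, disjoint from the interior of $\E$ — or more simply, $v|_{\Go}=0$). Hence Lemma~\ref{ext2} collapses to
$$
\int_{\E}\Delta(\PiK w_h)\,v = a^{\E}(e,v) + a^{\E}(w_h-\PiK w_h,v),
$$
with $e=w-w_h$. Then Cauchy--Schwarz on each term gives $\int_{\E}\Delta(\PiK w_h)\,v \le \big(|e|_{1,\E}+|w_h-\PiK w_h|_{1,\E}\big)\,|v|_{1,\E}$. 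Now invoke the second estimate of Lemma~\ref{burbujainterior}: $|v|_{1,\E}=\|\nabla(\psi_{\E}q)\|_{0,\E}\le C h_{\E}^{-1}\|q\|_{0,\E}$. Combining,
$$
\|q\|_{0,\E}^2 \le C\big(|e|_{1,\E}+|w_h-\PiK w_h|_{1,\E}\big)\,h_{\E}^{-1}\|q\|_{0,\E},
$$
so $h_{\E}\|q\|_{0,\E}=R_{\E}\le C\big(|e|_{1,\E}+|w_h-\PiK w_h|_{1,\E}\big)$. Finally I would convert $|w_h-\PiK w_h|_{1,\E}$ into $\theta_{\E}$: by \eqref{20} and Remark~\ref{RKK}, $|w_h-\PiK w_h|_{1,\E}^2\le c_0^{-1}S^{\E}(w_h-\PiK w_h,w_h-\PiK w_h)=c_0^{-1}\theta_{\E}^2$, which yields exactly the claimed bound $R_{\E}\le C(|w-w_h|_{1,\E}+\theta_{\E})$.

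The only subtlety — and the step I would be most careful about — is justifying that the edge and boundary contributions of Lemma~\ref{ext2} genuinely vanish for $v=\psi_{\E}q$ extended by zero: one needs $\psi_{\E}\in H_0^1(\E)$ so that the zero-extension lies in $\HuO$ and all terms localized to edges $\ell\in\CE_{\E}$ or to $\Go$ drop. This is precisely the construction of $\psi_{\E}$ recalled before Lemma~\ref{burbujainterior}. Everything else is routine Cauchy--Schwarz plus the bubble-function norm equivalences, so no genuine obstacle remains.
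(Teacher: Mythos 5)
Your proposal is correct and follows essentially the same route as the paper: test the error identity of Lemma~\ref{ext2} with $v=\psi_{\E}\Delta(\PiK w_h)$ extended by zero, use the norm equivalences of Lemma~\ref{burbujainterior}, and convert $|w_h-\PiK w_h|_{1,\E}$ into $\theta_{\E}$ via \eqref{20} and Remark~\ref{RKK}. No gaps.
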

\begin{proof}
For any $\E\in\CT_h$, let $\psi_{\E}$ be the corresponding
interior bubble function. We define  $v:=\psi_{\E}\Delta (\PiK  w_h)$. 
Since  $v$ vanishes on the boundary of $\E$, it may be extended by zero to the whole domain $\O$.
This extension, again denoted by $v$, belongs to $\HuO$ and from Lemma~\ref{ext2} we have 
$$a^\E(e,v)=-a^{\E}\!\left(w_{h}-\PiK w_h,\psi_{\E}\Delta (\PiK  w_h)\right)
+\int_{\E}\Delta (\PiK  w_h) \psi_{\E}\Delta (\PiK  w_h).$$
Since  $\Delta (\PiK  w_h)\in \P_{k-2}(\E)$,
using Lemma~\ref{burbujainterior} and the above equality we obtain
\begin{align}\nonumber\label{ghtjy}
C^{-1}\|\Delta (\PiK  w_h)\|_{0,\E}^2&\leq \int_{\E}\psi_{\E}\Delta (\PiK  w_h)^2\\\nonumber
&=a^{\E}\!\left(e,\psi_{\E}\Delta (\PiK  w_h)\right)+a^{\E}\!\left(w_{h}-\PiK w_h,\psi_{\E}\Delta (\PiK  w_h)\right)\\\nonumber
&\leq C\left(\left|e\right|_{1,\E}+\left|w_{h}-\PiK w_h\right|_{1,\E}\right)\left|\psi_{\E}\Delta (\PiK  w_h)\right|_{1,\E}\\
&\leq Ch_\E^{-1}\left(\left|e\right|_{1,\E}+\theta_{\E}\right)\left\|\Delta (\PiK  w_h)\right\|_{0,\E},
\end{align}
where, for the  last inequality,  we have used again
Lemma~\ref{burbujainterior} and \eqref{20} together with Remark~\ref{RKK}.
Multiplying the above inequality by $h_{\E}$ allows us to conclude the proof.
\end{proof}

Next goal is to obtain an upper estimate
for the local term $\theta_{\E}.$
\begin{lemma}
\label{cotaR}
There  exists $C>0$ independent of $h_\E$ such that
\begin{equation*}
\theta_{\E}\leq C\left(\vert w-w_h\vert_{1,\E}+\vert w-\PiK  w_{h}\vert_{1,\E}\right).
\end{equation*}
\end{lemma}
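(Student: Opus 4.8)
\textbf{Proof plan for Lemma~\ref{cotaR}.}
The plan is to estimate $\theta_{\E}^2 = a_h^\E(w_h-\PiK w_h,w_h-\PiK w_h)$ by first passing from the discrete bilinear form $a_h^\E$ to the exact one $a^\E$ using the stability property \eqref{24}, which gives $\theta_{\E}^2 \le \alpha^* a^\E(w_h-\PiK w_h, w_h-\PiK w_h) = \alpha^*|w_h-\PiK w_h|_{1,\E}^2$. Thus it suffices to bound $|w_h-\PiK w_h|_{1,\E}$. First I would insert $w$ by the triangle inequality, writing $|w_h-\PiK w_h|_{1,\E} \le |w_h - w|_{1,\E} + |w - \PiK w_h|_{1,\E}$, which immediately yields the claimed right-hand side.

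Wait — this is too direct; the subtle point is that \eqref{24} controls $a_h^\E$ by $a^\E$ only up to the constant $\alpha^*$, and $a^\E(v_h,v_h) = |v_h|_{1,\E}^2$ exactly, so in fact the chain above is legitimate and the lemma follows essentially for free: $\theta_{\E} \le \sqrt{\alpha^*}\,|w_h-\PiK w_h|_{1,\E} \le \sqrt{\alpha^*}\big(|w_h-w|_{1,\E} + |w-\PiK w_h|_{1,\E}\big)$. The only care needed is to confirm that $a_h^\E(v_h,v_h) \le \alpha^* a^\E(v_h,v_h)$ holds for \emph{all} $v_h \in \VK$ (not just those with vanishing projection), which is exactly the second inequality in \eqref{24}; and that $v_h = w_h - \PiK w_h \in \VK$, which holds because $\PiK w_h \in \P_k(\E) \subseteq \VK$ and $w_h|_{\E} \in \VK$.

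So the steps, in order, are: (i) recall $\theta_{\E}^2 = a_h^\E(w_h-\PiK w_h, w_h-\PiK w_h)$ from the definition; (ii) apply the stability bound \eqref{24} to get $\theta_{\E}^2 \le \alpha^*\, a^\E(w_h-\PiK w_h, w_h-\PiK w_h) = \alpha^*\,|w_h-\PiK w_h|_{1,\E}^2$; (iii) add and subtract $w$ and use the triangle inequality for the $H^1(\E)$-seminorm; (iv) collect the constant into a generic $C$. There is no genuine obstacle here — the lemma is a short consequence of the stability property, in contrast to Lemma~\ref{eficiencia1} where the bubble-function machinery and Lemma~\ref{ext2} were needed. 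The one thing I would double-check is simply that no additional oscillation or data term sneaks in; since $\PiK$ maps into polynomials of degree $k$ and the seminorm is computed elementwise, nothing else appears, and the bound is clean.
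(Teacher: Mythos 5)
Your proposal is correct and follows essentially the same route as the paper: reduce $\theta_{\E}$ to $C\vert w_h-\PiK w_h\vert_{1,\E}$ via the equivalence of the discrete form with the exact $H^1$-seminorm, then add and subtract $w$ and apply the triangle inequality. The only cosmetic difference is that the paper passes through Remark~\ref{RKK} (writing $\theta_{\E}^2=S^{\E}(w_h-\PiK w_h,w_h-\PiK w_h)$ and using \eqref{20}, which applies because $\PiK(w_h-\PiK w_h)=0$), while you invoke the stability bound \eqref{24} directly; both are legitimate and yield the same one-line estimate.
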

\begin{proof}
From the definition of $\theta_{\E}$
together with Remark~\ref{RKK} and estimate \eqref{20} we have 
\begin{align*}
\theta_{\E}&\leq C\vert w_h-\PiK  w_h\vert_{1,\E}\leq  C\left(\vert w_h-w\vert_{1,\E}+\vert w-\PiK  w_{h}\vert_{1,\E}\right).
\end{align*}
The proof is complete.
\end{proof}

The following lemma provides an upper estimate for
the jump terms of the local error indicator.

\begin{lemma}
There exists a constant $C>0$ independent of $h_\E$ such that
\label{lema4}
\begin{align}
\label{eqa}
h_\E^{1/2}\left\|J_{\ell}\right\|_{0,\ell}&\leq C\big(\vert w-w_{h}\vert_{1,\E}+\theta_{\E}\big)\hspace{3.4cm}\qquad\forall\ell\in\CE_{\E}\cap\CE_{\G_{1}}, \\\label{eqb}
h_\E^{1/2}\left\|J_{\ell}\right\|_{0,\ell}&\leq C\big(\vert w-w_{h}\vert_{1,\E}+\theta_{\E}+h_\E^{1/2}\left\|\l w -\l_h w_h\right\|_{0,\ell}\big)\hspace{0.5cm}\forall \ell\in\CE_{\E}\cap\CE_{\G_{0}},\\\label{eqc}
h_\E^{1/2}\left\|J_{\ell}\right\|_{0,\ell}&\leq C\sum_{\E'\in \omega_{\ell}}
\big(\vert w-w_{h}\vert_{1,\E'}+\theta_{\E'}\big)\hspace{2.3cm}\qquad \forall\ell\in\CE_{\E}\cap\CE_{\O},
\end{align}
where  $\omega_{\ell}:=\{\E'\in\CT_{h}: \ell\in \CE_{\E'}\}$.
\end{lemma}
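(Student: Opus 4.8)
The plan is to prove the three bounds of Lemma~\ref{lema4} using the standard edge-bubble-function technique (cf. \cite{AMOJT,Verfurth}), combined with the error identity of Lemma~\ref{ext2}, exactly as in the proof of Lemma~\ref{eficiencia1} but now localizing to an edge $\ell$ rather than to the interior of $\E$. For a given edge $\ell\in\CE_\E$, I set $q:=J_\ell$ (which is a polynomial on $\ell$ since $\PiK w_h\in\P_k(\E)$, and on $\CE_{\Go}$ one replaces $\l_h w_h$ by its natural polynomial representation — more precisely one works with $\l_h w_h-\partial(\PiK w_h)/\partial n$, which is the trace of a polynomial on $\ell$ up to the boundary data of $w_h$; this subtlety is handled by the extra term $h_\E^{1/2}\|\l w-\l_h w_h\|_{0,\ell}$ in \eqref{eqb}). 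I let $\psi_\ell$ be the edge bubble function and use the extension of $q$ to $\E$ provided by Lemma~\ref{burbuja}, and I test Lemma~\ref{ext2} with $v:=\psi_\ell\, q$, extended by zero outside $\omega_\ell$ (which is legitimate since $\psi_\ell$ vanishes on all triangles not touching $\ell$).

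The key computation goes as follows. By Lemma~\ref{burbuja}, $C^{-1}\|J_\ell\|_{0,\ell}^2\le\int_\ell\psi_\ell J_\ell^2$. Now I expand $\int_\ell\psi_\ell J_\ell^2=\sum_{\E'\in\omega_\ell}\int_\ell J_\ell(\psi_\ell q)$ and use the error identity of Lemma~\ref{ext2} applied to $v=\psi_\ell q$: since $v$ is supported in $\omega_\ell$ and $\psi_\ell q$ vanishes on every edge of $\E'$ other than $\ell$, the identity reduces (after moving the $a^\E(e,v)$, the consistency term $a^\E(w_h-\PiK w_h,v)$, the volumetric term $\int_\E\Delta(\PiK w_h)v$, and — for $\ell\in\CE_{\Go}$ — the term $\l b(w,v)-\l_h b(w_h,v)=\int_\ell(\l w-\l_h w_h)v$ to the other side) to an estimate of the form
\begin{align*}
\int_\ell\psi_\ell J_\ell^2
&\le\sum_{\E'\in\omega_\ell}\Bigl(|e|_{1,\E'}|\psi_\ell q|_{1,\E'}
+|w_h-\PiK w_h|_{1,\E'}|\psi_\ell q|_{1,\E'}
+\|\Delta(\PiK w_h)\|_{0,\E'}\|\psi_\ell q\|_{0,\E'}\Bigr)\\
&\quad{}+\delta_{\ell\in\CE_{\Go}}\,\|\l w-\l_h w_h\|_{0,\ell}\|\psi_\ell q\|_{0,\ell}.
\end{align*}
Applying the bounds from Lemma~\ref{burbuja}, namely $\|\psi_\ell q\|_{0,\E'}\le Ch_\E^{1/2}\|q\|_{0,\ell}$ and $|\psi_\ell q|_{1,\E'}\le Ch_\E^{-1/2}\|q\|_{0,\ell}$, together with Lemma~\ref{eficiencia1} to absorb the volumetric term $h_\E\|\Delta(\PiK w_h)\|_{0,\E'}\le C(|w-w_h|_{1,\E'}+\theta_{\E'})$ and \eqref{20}–Remark~\ref{RKK} to replace $|w_h-\PiK w_h|_{1,\E'}$ by $C\theta_{\E'}$, I obtain $\|J_\ell\|_{0,\ell}^2\le Ch_\E^{-1}\sum_{\E'\in\omega_\ell}(|w-w_h|_{1,\E'}+\theta_{\E'})\|J_\ell\|_{0,\ell}$, and dividing through and multiplying by $h_\E^{1/2}$ yields \eqref{eqc}. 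The cases \eqref{eqa} (where $\omega_\ell=\{\E\}$ and there is no boundary data term) and \eqref{eqb} (where $\omega_\ell=\{\E\}$ but the $\int_\ell(\l w-\l_h w_h)(\psi_\ell q)$ term survives, giving the extra summand $h_\E^{1/2}\|\l w-\l_h w_h\|_{0,\ell}$) are entirely analogous.

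The main obstacle I anticipate is bookkeeping rather than conceptual: one must be careful that when testing Lemma~\ref{ext2} with $v=\psi_\ell q$ the only surviving face integrals are over $\ell$ itself, which requires that $\psi_\ell q$ genuinely vanish on the remaining portion of $\partial\E'$ for each $\E'\in\omega_\ell$ — this is exactly the construction of $\psi_\ell$ recalled before Lemma~\ref{burbujainterior}, but for an interior edge shared by $\E$ and $\E'$ one gets the factor $\frac12$ in $J_\ell$ contributing correctly to match the jump. A second delicate point, specific to \eqref{eqb}, is that $J_\ell=\l_h w_h-\partial(\PiK w_h)/\partial n$ on $\CE_{\Go}$ is \emph{not} a polynomial in general (because $w_h|_{\Go}\in\mathbb{B}_k$ is only piecewise polynomial, though here it is polynomial on each edge $\ell$), so one should apply Lemma~\ref{burbuja} on each edge $\ell$ separately where $J_\ell|_\ell\in\P_k(\ell)$; this is fine. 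Finally, one uses \eqref{poincare1} only at the global stage, not here, so the local estimates stand as stated without a Poincaré constant.
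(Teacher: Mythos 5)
Your proposal is correct and follows essentially the same route as the paper: test the error identity of Lemma~\ref{ext2} with $v=\psi_\ell J_\ell$ (using the polynomial extension of Remark~\ref{extencion} and the zero extension outside the support of $\psi_\ell$), invoke Lemma~\ref{burbuja} for the scaling of $\|\psi_\ell J_\ell\|_{0,\E}$ and $|\psi_\ell J_\ell|_{1,\E}$, absorb the volumetric term via the bound $h_\E\|\Delta(\PiK w_h)\|_{0,\E}\le C(|w-w_h|_{1,\E}+\theta_\E)$ from Lemma~\ref{eficiencia1}, control $|w_h-\PiK w_h|_{1,\E}$ by $\theta_\E$ via \eqref{20} and Remark~\ref{RKK}, and retain the extra term $h_\E^{1/2}\|\l w-\l_h w_h\|_{0,\ell}$ only on $\Go$. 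The bookkeeping points you flag (the factor $\tfrac12$ on interior edges, $J_\ell|_\ell\in\P_k(\ell)$ on $\Go$, no use of \eqref{poincare1} locally) are all handled the same way in the paper's proof.
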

\begin{proof}
First, for $\ell \in \CE_{\E}\cap\CE_{\G_{1}}$,
we extend $J_{\ell}\in\P_{k-1}(\ell)$ to the element $\E$ as in Remark~\ref{extencion}.
Let $\psi_{\ell}$ be the corresponding edge bubble function. We define $v:=J_{\ell}\psi_{\ell}$. Then, $v$ may be extended by zero to the whole domain $\O$.
This extension, again denoted by $v$, belongs to $ \HuO$ and from Lemma~\ref{ext2} we have  that
$$a^{\E}(e,v)=-a^{\E}(w_{h}-\PiK  w_{h},J_{\ell}\psi_{\ell})
+\int_{\E}\Delta\left(\PiK  w_h\right)J_{\ell}\psi_{\ell}+\int_{\ell}J_{\ell}^{2}\psi_{\ell}.$$
For $J_{\ell}\in\P_{k-1}(\ell)$,
from Lemma~\ref{burbuja} and the above equality we obtain 
\begin{align*}
C^{-1}\left\|J_{\ell}\right\|^2_{0,\ell}&\leq \int_{\ell} J_{\ell}^2\psi_{\ell}\leq
C\left[\left(\vert e\vert_{1,\E}+\vert w_{h}-\PiK  w_{h}\vert_{1,\E}\right)
\left|\psi_{\ell}J_{\ell}\right|_{1,\E}+\left\|\Delta (\PiK  w_h)\right\|_{0,\E}
\left\|J_{\ell}\psi_{\ell}\right\|_{0,\E}\right]\\
&\leq C\left[\left(\vert e\vert_{1,\E}+\vert w_{h}-\PiK w_{h}\vert_{1,\E}\right)
h_\E^{-1/2}\left\|J_{\ell}\right\|_{0,\ell}+h_\E^{-1}\left(\theta_{\E}
+\vert e\vert_{1,\E}\right)h_\E^{1/2}\left\|J_{\ell}\right\|_{0,\ell}\right]\\
&\leq Ch_\E^{-1/2}\left\|J_{\ell}\right\|_{0,\ell}\big(\vert e\vert_{1,\E}+\theta_{\E}\big),
\end{align*}
where  we have used again Lemma \ref{burbuja} together with estimate \eqref{ghtjy}.
Multiplying by $h_{\E}^{1/2}$ the above inequality allows us to conclude \eqref{eqa}.

Secondly, for $\ell\in\CE_{\E}\cap\CE_{\G_{0}}$, we extend $v:=J_{\ell}\psi_{\ell}$ to  $\HuO$ as in the previous case.
Taking into account that in this case $J_{\ell}\in\P_{k}(\ell)$
and $\psi_{\ell}$ is a quadratic bubble function in $\E$, from Lemma~\ref{ext2}
we obtain
$$a^{\E}(e,v)=\l\int_{\ell} wJ_{\ell}\psi_{\ell}-\l_{h}\int_{\ell} w_{h}J_{\ell}\psi_{\ell}
-a^{\E}\left(w_h-\PiK  w_h,J_{\ell}\psi_{\ell}\right)+\int_{\E}\Delta
\left(\PiK w_h\right)J_{\ell}\psi_{\ell}+\int_{\ell} J_{\ell}^{2}\psi_{\ell}.$$
Then, repeating the previous arguments  we obtain
\begin{equation*}
\left\vert\int_{\ell} J_{\ell}^{2}\psi_{\ell}\right\vert
\leq C\left[\left\vert\l_{h}\int_{\ell} w_{h}J_{\ell}\psi_{\ell}
-\l\int_{\ell} wJ_{\ell}\psi_{\ell}\right\vert+h_\E^{-1/2}\left\|J_{\ell}\right\|_{0,\ell}
\left(\theta_{\E}+|e|_{1,\E}\right)\right].
\end{equation*}
Hence, using Lemma~\ref{burbuja} and a local trace inequality we arrive at
\begin{align*}
\|J_{\ell}\|^2_{0,\ell}&\leq C\left[\left\|\l w -\l_h w_h\right\|_{0,\ell}\left\|\psi_{\ell}J_{\ell}\right\|_{0,\ell}+h_\E^{-1/2}\left(\theta_{\E}+|e|_{1,\E}\right)\|J_{\ell}\|_{0,\ell}\right]\\
&\leq Ch_\E^{-1/2}\|J_{\ell}\|_{0,\ell}\left(\theta_{\E}+|e|_{1,\E}+h_\E^{1/2}\|\l w -\l_h w_h\|_{0,\ell}\right),
\end{align*}
where we have used Lemma \ref{burbuja} again. Multiplying by $h_{\E}^{1/2}$ the above inequality yields $\eqref{eqb}$.

Finally, for $\ell \in \CE_{\E}\cap\CE_\O$, we extend
$v:=J_{\ell}\psi_{\ell}$ to $\HuO$ as above again. Taking into account that
$J_{\ell}\in\P_{k-1}(\ell)$ and $\psi_{\ell}$ is a quadratic bubble function in $\E$,
from Lemma~\ref{ext2} we obtain
$$a(e,v)=-\sum_{\E'\in \omega_{\ell}}a^{\E'}(w_{h}-\Pi_{k}^{\E'} w_h,J_{\ell}\psi_{\ell})+\sum_{\E'\in \omega_{\ell}}\int_{\E'}\Delta \left(\Pi_{k}^{\E'} w_h\right) J_{\ell}\psi_{\ell}+\sum_{\E'\in \omega_{\ell}}\int_{\ell} J_{\ell}^{2}\psi_{\ell}.$$
Then, proceeding analogously to the previous case we obtain
\begin{equation*}
\|J_{\ell}\|^2_{0,\ell}\leq Ch_{\E}^{-1/2}\|J_{\ell}\|_{0,\ell}
\left[\sum_{\E'\in \omega_{\ell}}(|e|_{1,\E'}+\theta_{\E'})\right].
\end{equation*}
Thus, the proof is complete.
\end{proof}

Now, we are in a position to prove an upper bound for the local error indicators $\eta_\E$. 
\begin{theorem}
\label{eficiencia}
There  exists $C>0$ such that
$$
\eta_{\E}^2\leq 
C\left[\displaystyle\sum_{\E'\in \omega_{\E}}\left(|w-\Pi_k^{\E'}w_{h}|_{1,\E'}^{2}+|w-w_{h}|_{1,\E'}^{2}+\displaystyle\sum_{\ell \in \CE_{\E}\cap\CE_{\Go}}h_\E\|\l w -\l_h w_h\|_{0,\ell}^2\right)\right],
$$
$\text{ where }\omega_{\E}:=\{\E'\in \CT_{h}: \E' \text{ and } \E\text{ share an edge}\}$.
\end{theorem}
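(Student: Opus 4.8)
The plan is to prove the bound $\eta_{\E}^2 \le C\sum_{\E'\in\omega_{\E}}(\cdots)$ by recalling the definition $\eta_{\E}^2 = \theta_{\E}^2 + R_{\E}^2 + \sum_{\ell\in\CE_{\E}} h_\E\|J_\ell\|_{0,\ell}^2$ and bounding each of the three pieces separately using the lemmas already established in this subsection. The term $\theta_{\E}$ is controlled directly by Lemma~\ref{cotaR}, which gives $\theta_{\E}\le C(|w-w_h|_{1,\E} + |w-\PiK w_h|_{1,\E})$; squaring this yields a contribution of the required form supported on $\E$ itself.

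Next I would handle the volumetric residual $R_{\E}$. By Lemma~\ref{eficiencia1} we have $R_{\E}\le C(|w-w_h|_{1,\E} + \theta_{\E})$, and then substituting the estimate for $\theta_{\E}$ from Lemma~\ref{cotaR} bounds $R_{\E}^2$ again by a local contribution of the stated type. The edge terms $\sum_{\ell\in\CE_{\E}} h_\E\|J_\ell\|_{0,\ell}^2$ are split according to whether $\ell\in\CE_{\G_1}$, $\ell\in\CE_{\G_0}$, or $\ell\in\CE_{\O}$, and for each case the corresponding inequality \eqref{eqa}, \eqref{eqb}, or \eqref{eqc} of Lemma~\ref{lema4} is applied. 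Again, every occurrence of $\theta_{\E}$ (or $\theta_{\E'}$) on the right-hand side is replaced using Lemma~\ref{cotaR}, turning it into $|w-w_h|_{1,\E'} + |w-\PiK[\E']w_h|_{1,\E'}$ terms. The edge case $\ell\in\CE_{\O}$ is the one that brings in neighbouring elements $\E'\in\omega_\ell\subset\omega_{\E}$, which is precisely why the final bound is over the patch $\omega_{\E}$ rather than $\E$ alone; the $\CE_{\G_0}$ case is the only one producing the extra data-oscillation term $h_\E\|\l w - \l_h w_h\|_{0,\ell}^2$.

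Finally I would collect the three estimates, use that $\omega_\ell\subset\omega_{\E}$ for each $\ell\in\CE_{\E}$ and that the number of elements in any $\omega_{\E}$ is bounded uniformly (a consequence of the mesh regularity assumptions \textbf{A1}--\textbf{A3}), and sum. Since there are finitely many edges per element (again by mesh regularity), the sum over $\ell\in\CE_{\E}$ of each local patch contribution is absorbed into a single constant times $\sum_{\E'\in\omega_{\E}}(|w-\Pi_k^{\E'}w_h|_{1,\E'}^2 + |w-w_h|_{1,\E'}^2) + \sum_{\ell\in\CE_{\E}\cap\CE_{\G_0}}h_\E\|\l w - \l_h w_h\|_{0,\ell}^2$, which is the claimed inequality.

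This proof is essentially bookkeeping: there is no genuine obstacle, since all the analytic work — the bubble-function arguments, the local trace inequalities, the use of the error equation of Lemma~\ref{ext2} — has already been carried out in Lemmas~\ref{eficiencia1}, \ref{cotaR} and \ref{lema4}. The only point requiring mild care is making sure the $\theta$-substitutions and the patch overlaps are handled so that the final right-hand side involves only $\Pi_k^{\E'}w_h$ (not $\PiK w_h$ restricted incorrectly) on each neighbouring element, and that the constant $C$ depends only on the regularity parameters $\g$, $\hg$ and the polynomial degree $k$, not on $h_\E$.
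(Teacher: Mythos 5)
Your argument is correct and coincides with the paper's proof, which simply states that the theorem follows immediately from Lemmas~\ref{eficiencia1}, \ref{cotaR} and \ref{lema4}; your substitution of the $\theta_{\E}$ bound and the patch bookkeeping are exactly the implicit steps. No issues.
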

\begin{proof}
It follows immediately from Lemmas~\ref{eficiencia1}--\ref{lema4}.
\end{proof}
According to the above theorem, the error indicators $\eta_{\E}^{2}$ provide lower bounds of the error terms $\sum_{\E'\in \omega_{\E}}
\left(|w-\Pi_k^{\E'}w_{h}|_{1,\E'}^{2}+|w-w_{h}|_{1,\E'}^{2}\right)$ in the neighborhood $\omega_{\E}$ of $\E$. For those elements $\E$ with an edge on $\G_{0}$, the term $h_\E\|\l w -\l_h w_h\|_{0,\ell}^2$ also appears in the estimate. Let us remark that it is reasonable to expect  this terms to be asymptotically negligible. In fact,  this is the case at least for the global estimator $\eta^{2}=\sum_{\E\in \CT_h}\eta_{\E}^2$ as is shown in the following result.
\begin{corollary}
There exists a constant $C>0$ such that
$$
\eta^2\leq C\left(|w-w_{h}|_{1,\O}^2+\vert w-\Pi_h w_{h}\vert_{1,h}^{2}\right). 
$$
\end{corollary}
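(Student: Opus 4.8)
The plan is to sum the local efficiency estimate of Theorem~\ref{eficiencia} over all $\E\in\CT_h$ and then show that the only non-standard term on the right-hand side, namely $\sum_{\ell\in\CE_{\Go}}h_\E\|\l w-\l_h w_h\|_{0,\ell}^2$, is controlled by the error. First I would observe that, because of the shape-regularity of the family $\{\hCT_h\}_h$ and assumption \textbf{A3}, each element $\E$ has a uniformly bounded number of neighbours, so that summing $\eta_\E^2\le C\sum_{\E'\in\omega_\E}(\cdots)$ over $\E$ produces only a finite-overlap multiplicity constant; hence
\begin{equation*}
\eta^2\le C\left(|w-w_h|_{1,\O}^2+\sum_{\E\in\CT_h}|w-\Pi_k^{\E}w_h|_{1,\E}^2+\sum_{\ell\in\CE_{\Go}}h_\E\|\l w-\l_h w_h\|_{0,\ell}^2\right).
\end{equation*}
The middle sum is precisely $|w-\Pi_h w_h|_{1,h}^2$ by the definition of $\Pi_h$, so what remains is to bound the boundary sum.

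For the boundary term I would proceed as follows. Write $\l w-\l_h w_h=\l(w-w_h)+(\l-\l_h)w_h$ on $\Go$, so that, using the triangle inequality and summing the squares over $\ell\in\CE_{\Go}$,
\begin{equation*}
\sum_{\ell\in\CE_{\Go}}h_\E\|\l w-\l_h w_h\|_{0,\ell}^2
\le C\max_{\E}h_\E\left(\|w-w_h\|_{0,\Go}^2+|\l-\l_h|^2\|w_h\|_{0,\Go}^2\right)
\le Ch\left(\|w-w_h\|_{0,\Go}^2+|\l-\l_h|^2\right),
\end{equation*}
where I used $\|w_h\|_{0,\Go}=1$. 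Now I invoke the results already proved in this section: Lemma~\ref{asintotico} gives $\|w-w_h\|_{0,\Go}\le Ch^{\min\{r,1\}/2}(|w-w_h|_{1,\O}+|w-\Pi_h w_h|_{1,h})$, which in particular is bounded by $C(|w-w_h|_{1,\O}+|w-\Pi_h w_h|_{1,h})$, and \eqref{cfutu} in Corollary~\ref{cotalambda} gives $|\l-\l_h|\le C(|w-w_h|_{1,\O}^2+|w-\Pi_h w_h|_{1,h}^2)$. Substituting these in, the boundary sum is bounded by $Ch\bigl(|w-w_h|_{1,\O}^2+|w-\Pi_h w_h|_{1,h}^2\bigr)$ up to higher-order factors, hence in any case by $C\bigl(|w-w_h|_{1,\O}^2+|w-\Pi_h w_h|_{1,h}^2\bigr)$.

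Collecting the three contributions yields the claimed bound $\eta^2\le C\bigl(|w-w_h|_{1,\O}^2+|w-\Pi_h w_h|_{1,h}^2\bigr)$. I do not expect any real obstacle here: the finite-overlap counting argument is routine given \textbf{A2}--\textbf{A3}, and the only slightly delicate point is making sure the boundary residual term is absorbed — but this is immediate once one uses Lemma~\ref{asintotico} and \eqref{cfutu}, both of which are already available. If one wishes to be sharper, the extra factor $h$ in front of the boundary term shows it is in fact asymptotically negligible, which matches the discussion preceding the corollary; but for the stated (non-asymptotic) inequality this refinement is not needed.
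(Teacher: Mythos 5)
Your argument is correct and follows essentially the same route as the paper: sum the local efficiency bounds of Theorem~\ref{eficiencia}, split $\l w-\l_h w_h=\l(w-w_h)+(\l-\l_h)w_h$ using $\|w_h\|_{0,\Go}=1$, and absorb the two pieces via a bound for $\|w-w_h\|_{0,\Go}$ and estimate \eqref{cfutu}. The only cosmetic differences are that the paper controls $\|w-w_h\|_{0,\Go}$ by a plain trace inequality together with \eqref{poincare1} rather than the stronger Lemma~\ref{asintotico}, and it makes your ``higher-order factors'' remark precise by writing $|\l-\l_h|^2\le(|\l|+|\l_h|)\,|\l-\l_h|$ before invoking \eqref{cfutu}.
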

\begin{proof}
From Theorem \ref{eficiencia} we have that 
$$
\eta^2\leq C\left(|w-w_{h}|_{1,\O}^2+\vert w-\Pi_h w_{h}\vert_{1,h}^{2}+h\|\l w -\l_h w_h\|_{0,\G_{0}}^2\right). 
$$
The last term on the right
hand side above is  bounded as follows: 
\begin{align*}
%\displaystyle\sum_{\E\in \CT_{h}}\sum_{\ell \in \CE_{\E}\cap\CE_{\Go}}h_\E\|\l w -\l_h w_h\|_{0,\ell}^2&\leq
\|\l w -\l_h w_h\|_{0,\G_{0}}^2
&\leq 2 \l^{2}\|w-w_h\|_{0,\G_0}^{2}+2|\l-\l_h|^{2},
\end{align*}
where we have used that $\|w_{h}\|_{0,\G_{0}}=1$. Now, by using a trace inequality and Poincar\'e inequality  \eqref{poincare1} we have
$$\|w-w_h\|_{0,\G_0}\leq C|w-w_h|_{1,\O}.$$
On the other hand, using the estimate \eqref{cfutu}, we have
\begin{align*}
|\l-\l_{h}|^{2}\leq (|\l|+|\l_{h}|)|\l-\l_{h}|\leq C\left(|w-w_h|_{1,\O}^{2}+|w-\Pi_h  w_{h}|_{1,h}^{2}\right).
\end{align*} 
Therefore,
$$
\eta^2\leq C\left(|w-w_{h}|_{1,\O}^2+\vert w-\Pi_h w_{h}\vert_{1,h}^{2}\right)
$$
and we conclude the proof.
\end{proof}

\setcounter{equation}{0}
\section{Numerical results}
\label{SEC:NUMERejemplo}
In this section, we will investigate the behavior of an adaptive scheme driven by the error indicator  in two numerical tests that
differ in the shape of the computational domain $\O$ and,
hence, in the regularity of the exact solution. With this aim, we have
implemented in a MATLAB code a lowest-order VEM ($k=1$) on arbitrary
polygonal meshes following the ideas proposed in \cite{BBMR2014}.

To complete the choice of the VEM, we had to choose  the bilinear forms
$S^{\E}(\cdot,\cdot)$ satisfying \eqref{20}. In this respect, we
proceeded as in \cite[Section 4.6]{BBCMMR2013}: for each polygon $\E$ with
vertices $P_1,\dots,P_{N_{\E}}$, we used
$$
S^{\E}(u,v):=\sum_{r=1}^{N_{\E}}u(P_r)v(P_r),
\qquad u,v\in V^{\E}_1.
$$

In all our tests we have initiated the adaptive process with a
coarse triangular mesh. In order to compare the performance of
VEM with that of a finite element method (FEM), we have used two different algorithms to refine the meshes.
The first one is based on  a classical FEM strategy for which all the subsequent  meshes consist of triangles. In such a case,
for $k=1$, VEM reduces to FEM. The other procedure to refine the
meshes is described in \cite{BMm2as}. It consists of splitting
each element into $n$ quadrilaterals ($n$ being the number of edges of the polygon) by connecting the
barycenter of the element with the midpoint  of each edge as shown
in Figure~\ref{FIG:cero} (see \cite{BMm2as} for more details).
Notice that although this process is initiated with a mesh of triangles,
the successively created meshes will contain other kind of convex  polygons, as can be seen in Figures~\ref{FIG:uno} and \ref{FIG:cinco}.

\begin{figure}[H]
\centering
\subfigure[Triangle $\E$ refined into 3 quadrilaterals.]{\includegraphics[height=3.6cm, width=3.6cm]{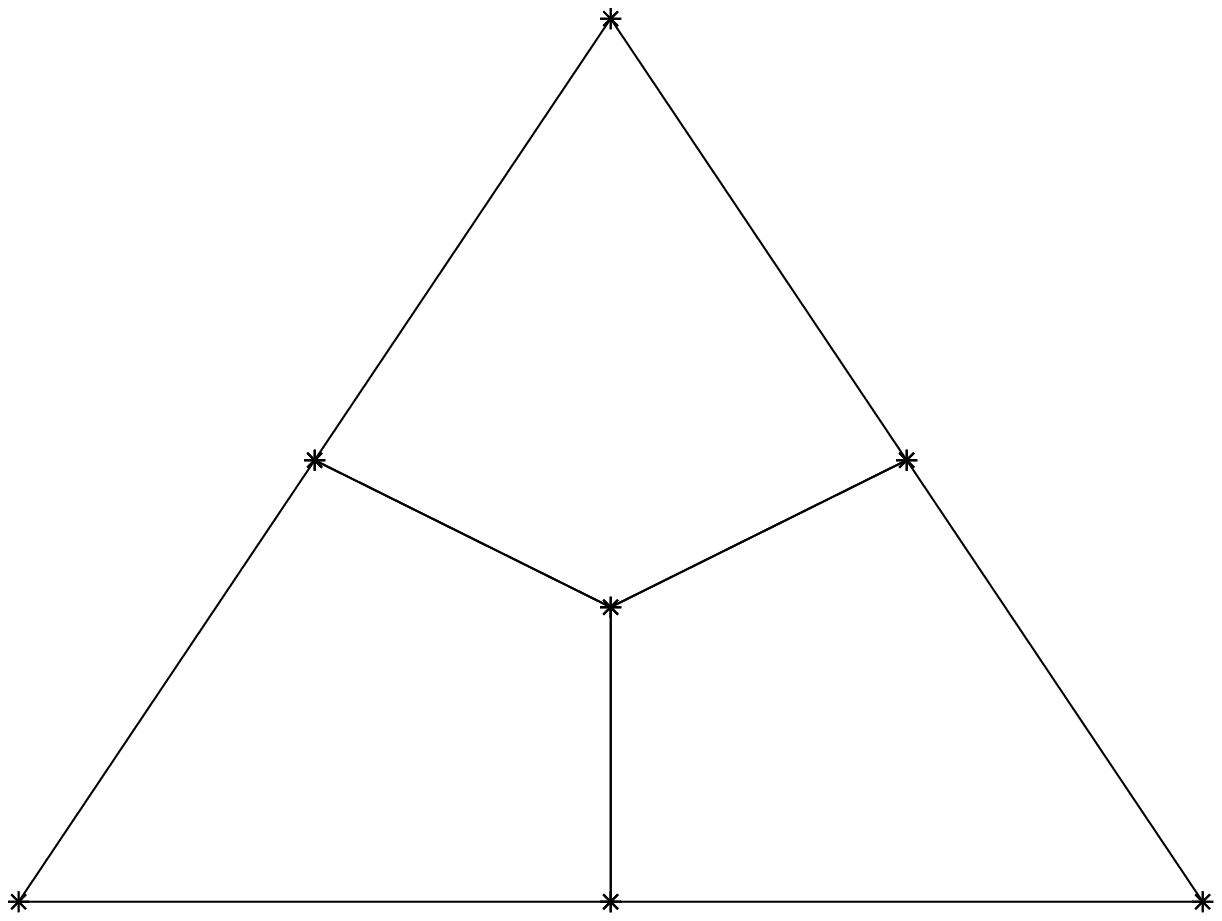}}
\hspace{0.5cm}\subfigure[Pentagon $\E$ refined into 5 quadrilaterals.]{\includegraphics[height=3.6cm, width=3.6cm]{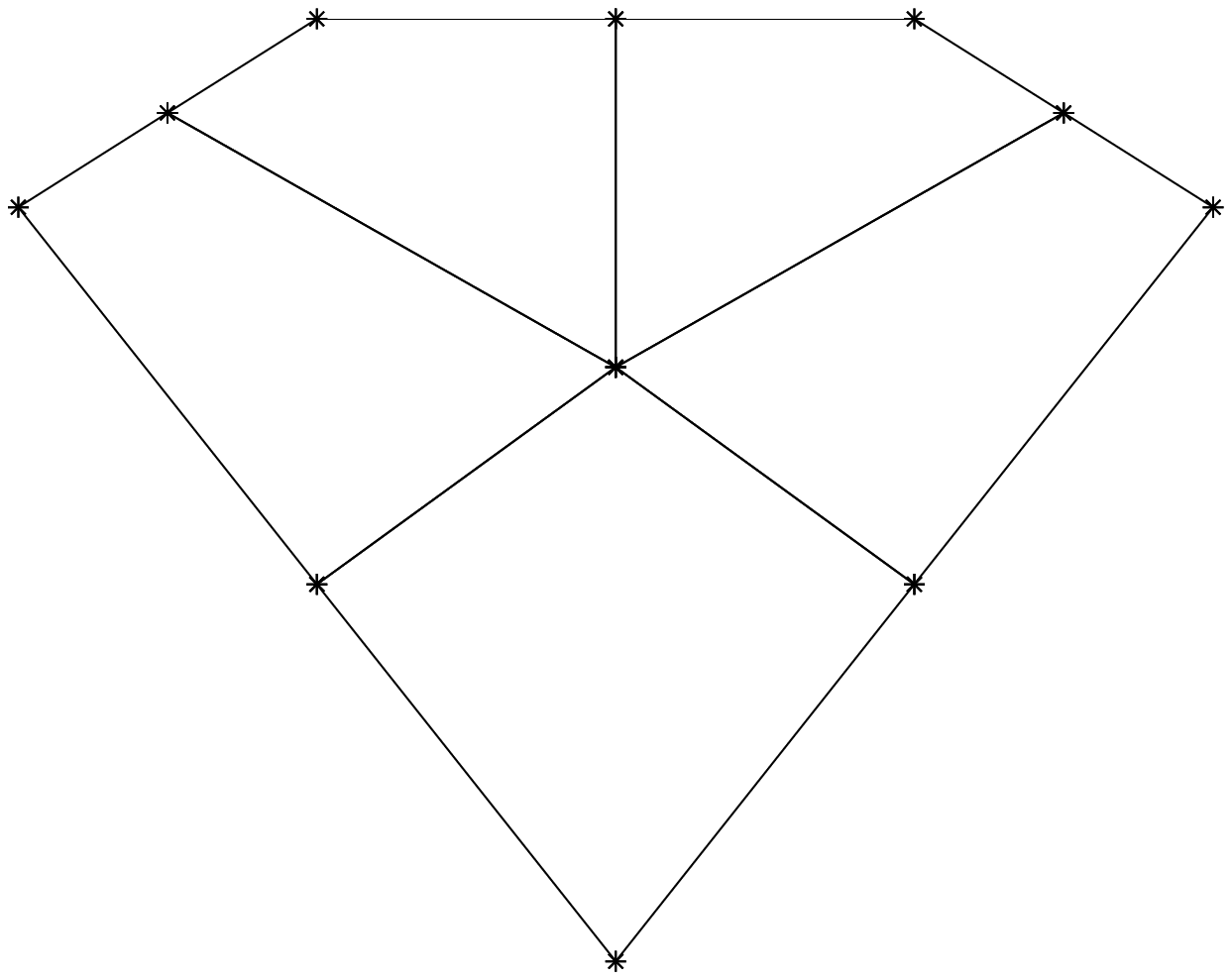}}
\caption{ Example of refined  elements for VEM strategy.}
 \label{FIG:cero}
\end{figure}

Since  we have chosen $k=1$, according to the definition
of the local virtual element space $V_{1}^{\E}$ (cf. \eqref{Vk}), the term
$R_{\E}^{2}:=h_{\E}^{2}\|\Delta w_{h}\|_{0,\E}^{2}$ vanishes.
Thus, the error indicators reduce in this case to 
$$\eta_{\E}^{2}= \theta_{\E}^2+\sum_{\ell \in \CE_{\E}}h_\E\|J_{\ell}\|_{0,\ell}^2
\qquad \forall\E\in \CT_{h}.$$
Let us remark that in the case of triangular meshes, the term
$\theta_{\E}^2:=a_h^\E(w_h-\PiK  w_h,w_h-\PiK  w_h)$ vanishes too,
since $V_{1}^{\E}=\P_{1}(\E)$ and hence $\PiK$ is the identity.
By  the same reason, the projection $\PiK$ also disappears
in the definition  \eqref{saltocal} of $J_{\ell}$.
Therefore, for triangular meshes, not only VEM reduces to FEM,
but also the error indicator becomes the classical well-known
edge-residual error estimator (see \cite{AP_APNUM2009}):
 \begin{equation*}
\eta_{\E}^2:=\disp\sum_{\ell \in \CE_{\E}}h_\E\|J_{\ell}\|_{0,\ell}^2 \quad\qquad  \text{with}\qquad\quad J_{\ell}:=\left\{\begin{array}{ll}
\dfrac{1}{2}\left[\!\!\left[ \dfrac{\partial w_{h}}{\partial{n}}\right]\!\!\right]_{\ell},\quad & \ell\in \CE_{\O},
\\[0.4cm]
\l_h w_h-\dfrac{\partial w_{h}}{\partial{n}}, \quad & \ell\in\CE_{\G_0},
\\[0.4cm]
-\dfrac{\partial w_{h}}{\partial{n}}, \quad & \ell\in\CE_{\G_1}.
\end{array}\right.
\end{equation*}
 In what follows, we report the results of a couple of tests.
 In both cases, we will restrict our attention to the approximation
 of the eigenvalues. Let us recall that according to  Corollary~\ref{cotalambda},
 the global error estimator $\eta^{2}$ provides an upper bound of
 the error of the computed eigenvalue.

\subsection{Test 1: Sloshing in a square domain.}
We have chosen for this test a problem with known analytical solution.
It corresponds to the computation of the sloshing modes of a
two-dimensional fluid contained in the domain
$\O:=(0,1)^2$ with a horizontal free surface
$\Go$ as shown in Figure~\ref{FIG:SLOSH}. The  solutions of this problem are
$$
\l_n=n\pi\tanh(n\pi),
\qquad w_n(x,y)=\cos(n\pi x)\sinh(n\pi y),
\qquad n\in\N.
$$

\vspace*{0.25cm}
\begin{figure}[H]
\begin{center}
\input{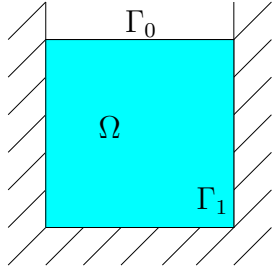}
\caption{Test 1. Sloshing in a square domain.}
\label{FIG:SLOSH}
\end{center}
\end{figure}
We have used the two refinement procedures  (VEM and FEM ) described above. Both schemes are based on the strategy of refining
those elements $\E$ which satisfy
$$\eta_{\E}\geq 0.5 \max_{\E'\in \CT_{h}}\{\eta_{\E'}\}.$$
 
Figures~\ref{FIG:uno} and \ref{FIG:dos}  show the adaptively refined meshes
obtained with VEM and FEM procedures, respectively. 
\begin{figure}[H]
\centering
\subfigure[Initial  mesh. ]{\includegraphics[height=5.0cm, width=5.0cm]{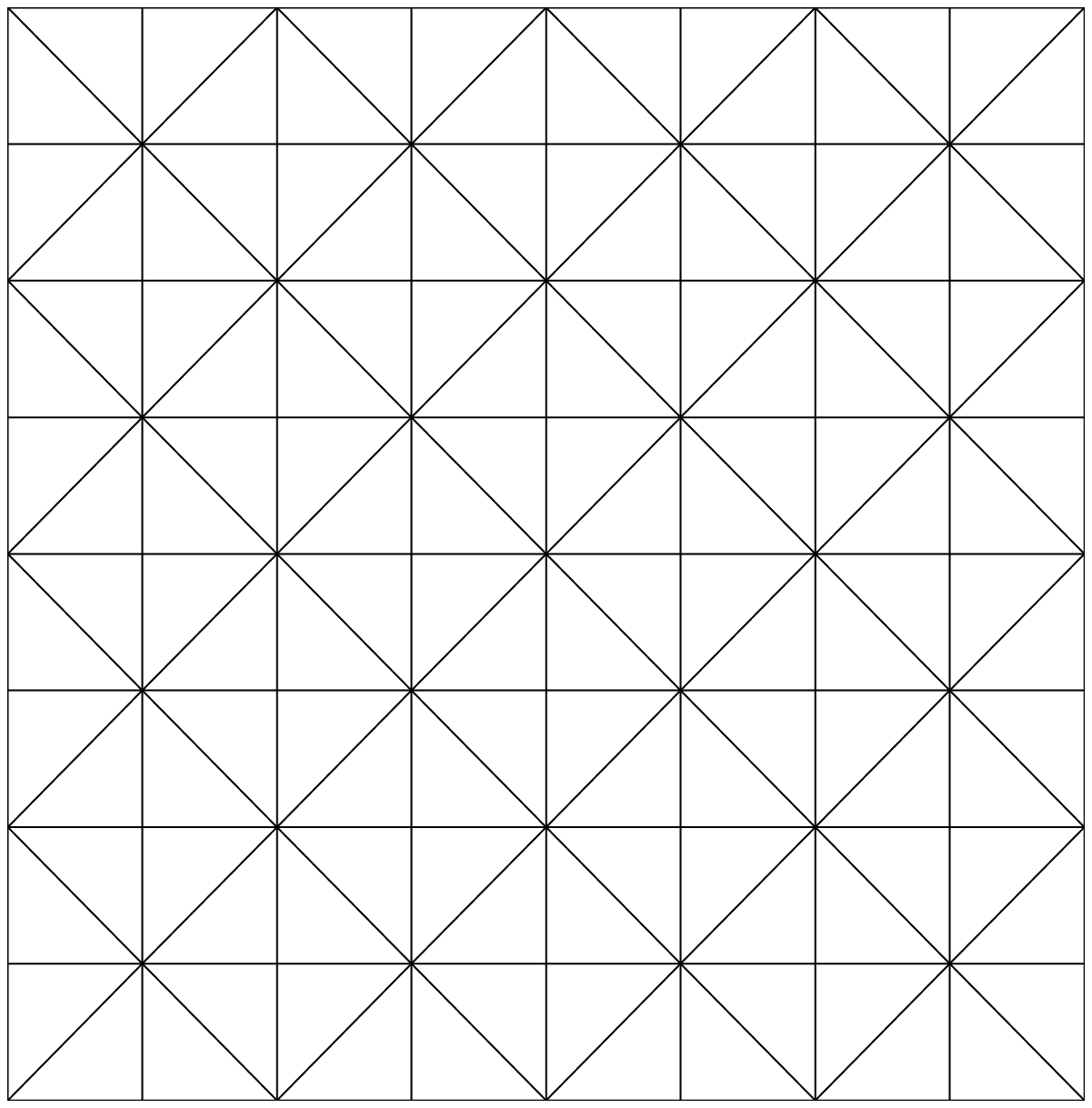}}
\subfigure[Step 1.]{\includegraphics[height=5.0cm, width=5.0cm]{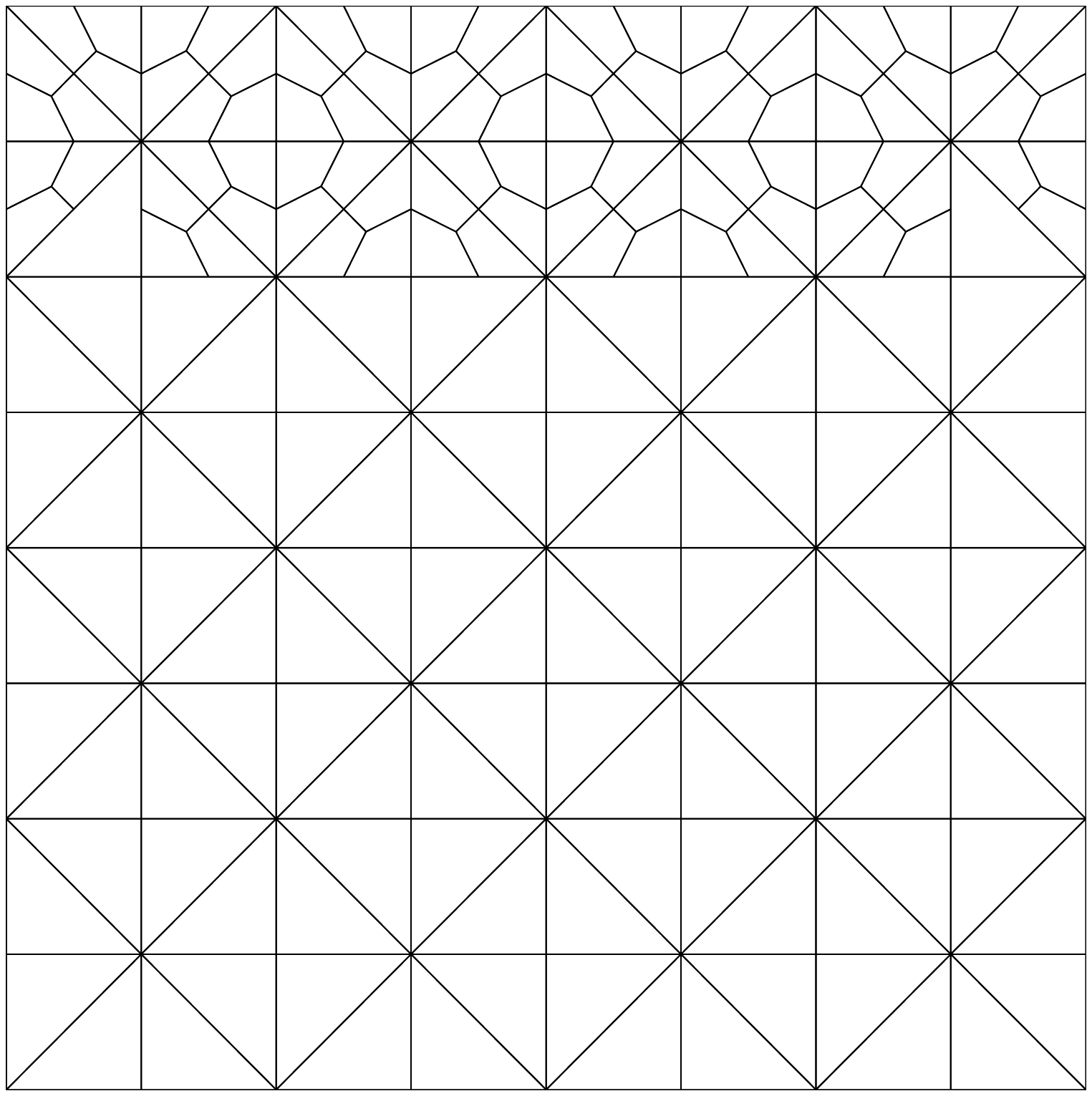}}\\
\subfigure[Step  3. ]{\includegraphics[height=5.0cm, width=5.0cm]{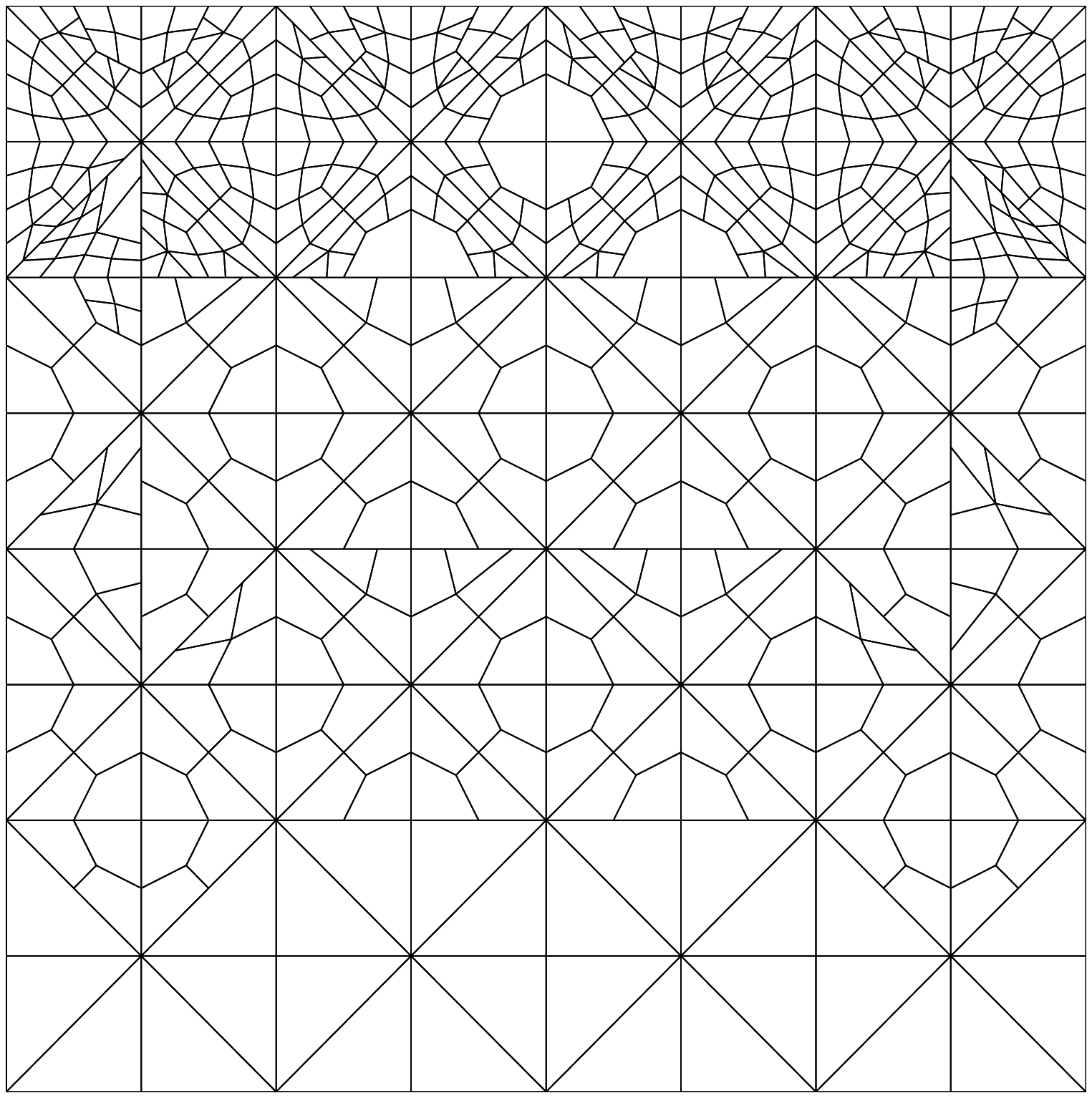}}
\subfigure[Step 6.]{\includegraphics[height=5.0cm, width=5.0cm]{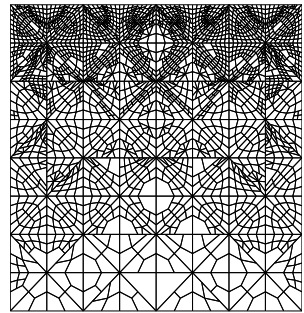}}
\caption{ Test 1. Adaptively refined meshes obtained with  VEM scheme at refinement steps 0, 1, 3 and 6.}
\label{FIG:uno}
\end{figure}
\begin{figure}[H]
\centering
\subfigure[Initial mesh.]{\includegraphics[height=5.0cm, width=5.0cm]{test1uno.eps}}
\subfigure[Step 1. ]{\includegraphics[height=5.0cm, width=5.0cm]{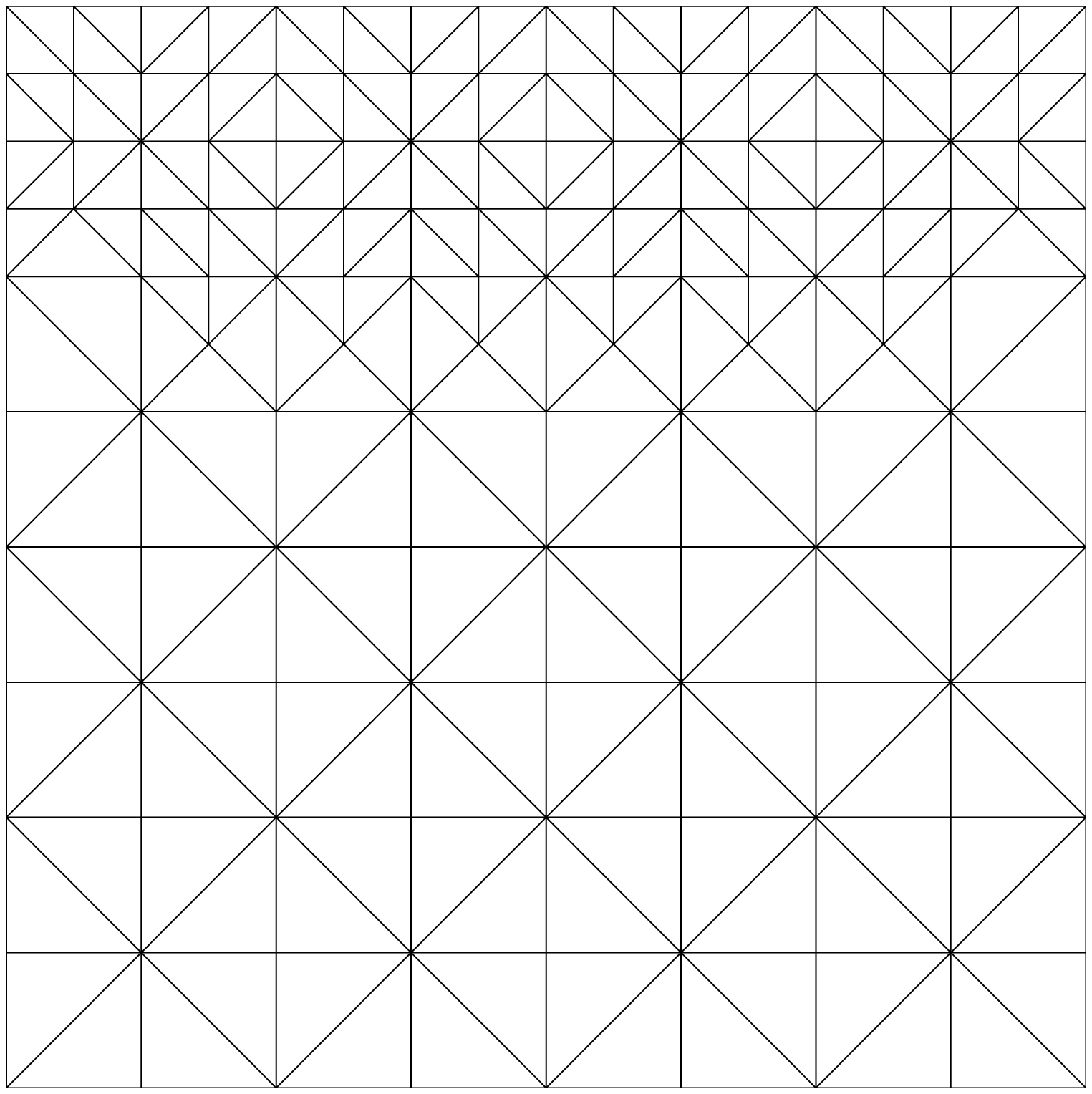}}\\
\subfigure[Step 3.]{\includegraphics[height=5.0cm, width=5.0cm]{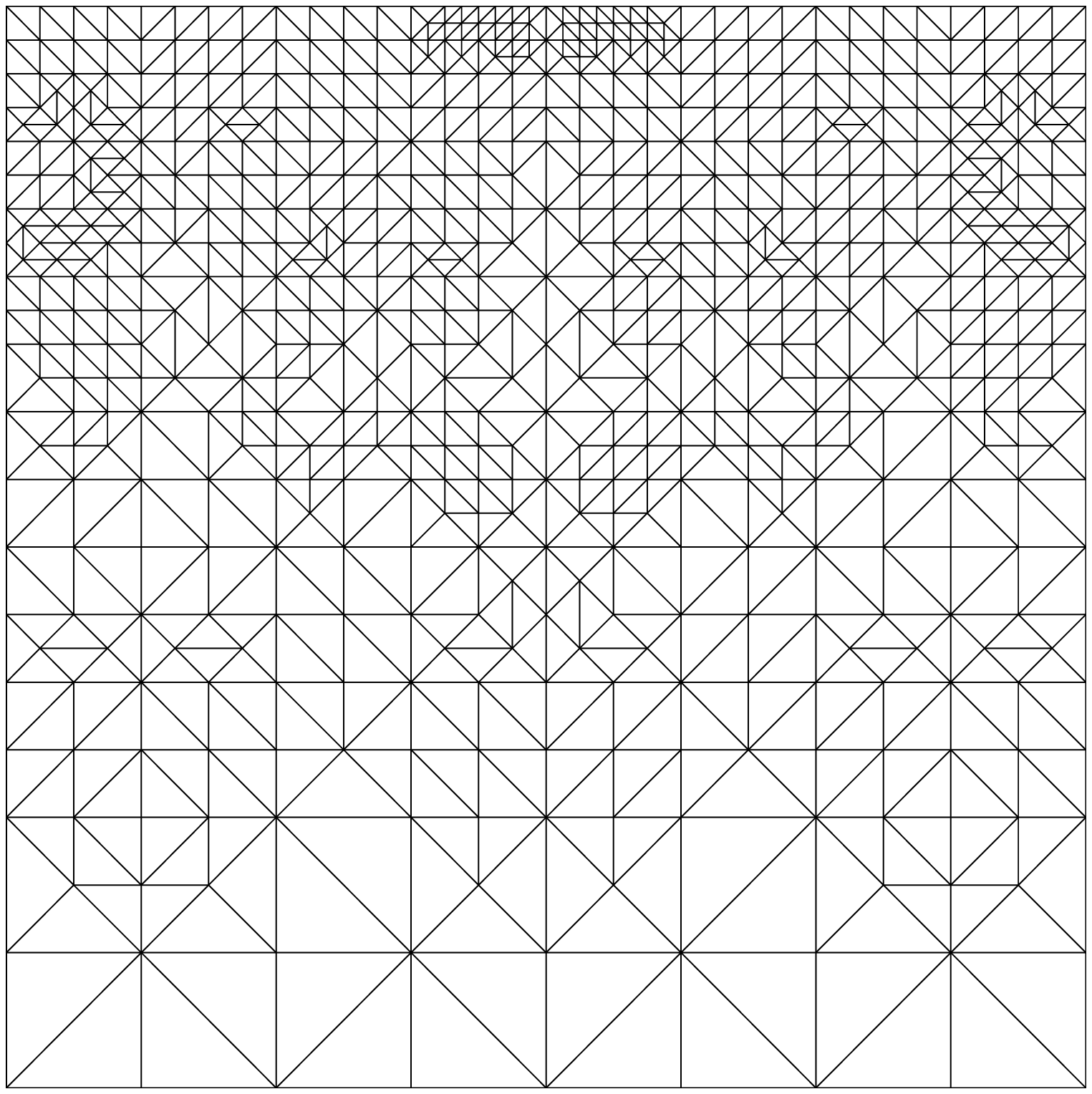}}
\subfigure[Step 6.]{\includegraphics[height=5.0cm, width=5.0cm]{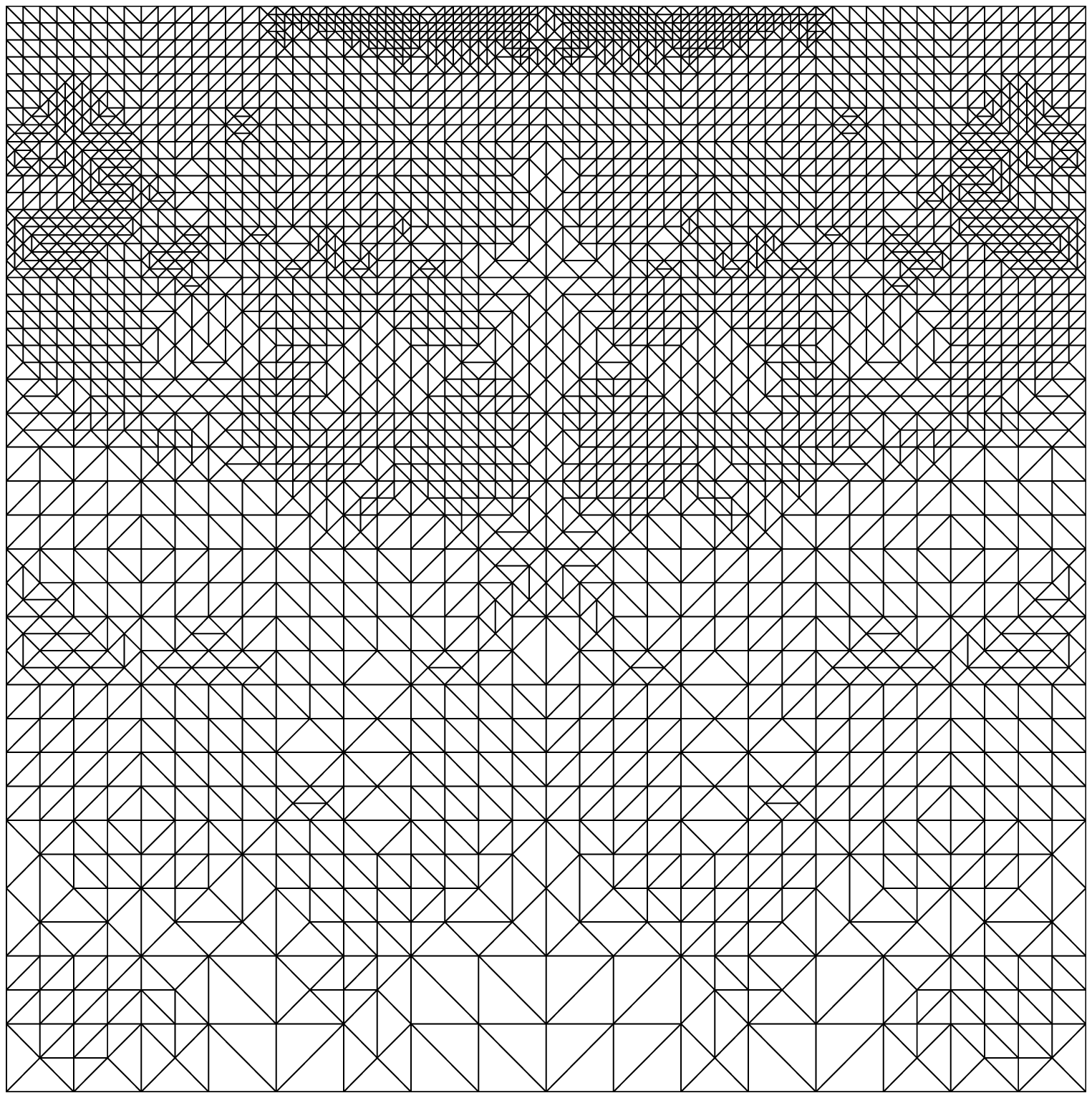}}
\caption{ Test 1. Adaptively refined meshes obtained with  FEM scheme at refinement steps 0, 1, 3 and 6.}
 \label{FIG:dos}
\end{figure}

Since the eigenfunctions of this problem are smooth, according to \eqref{eq29} we have that 
$|\l-\l_{h}|=\mathcal{O}(h^{2})$. Therefore, in case of uniformly refined meshes,
$|\l-\l_{h}|=\mathcal{O}\left(N^{-1}\right)$, where $N$ denotes the number
of degrees of freedom which is the optimal convergence rate that
can be attained.

Figure~\ref{FIG:tres} shows the error curves for the computed lowest eigenvalue on
uniformly refined meshes and adaptively refined meshes with FEM and VEM schemes.
The plot also includes a line of slope $-1$, which correspond to the optimal
convergence rate of the method $\mathcal{O}\left(N^{-1}\right)$. 

\begin{figure}[H]
\caption{Test 1. Error curves of  $|\l_1-\l_{h1}|$ for  uniformly refined meshes (``Uniform FEM''), adaptively refined meshes with  FEM (``Adaptive FEM'') and adaptively refined meshes with VEM (``Adaptive VEM'').}
\centering\includegraphics[height=10cm, width=10cm]{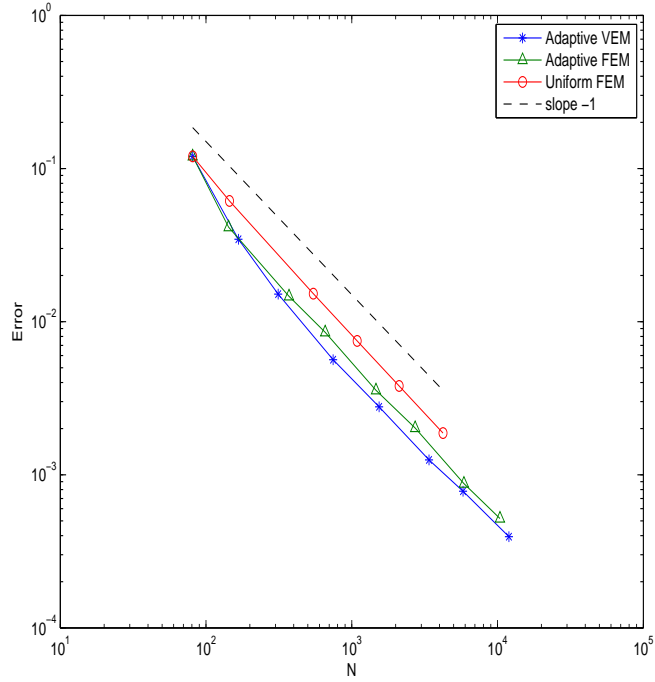}
\label{FIG:tres}
\end{figure}
It can be seen from Figure~\ref{FIG:tres} that the three refinement
schemes lead to the correct convergence rate. Moreover,
the performance of adaptive VEM is slightly better than that of 
adaptive FEM, while this is also better than uniform FEM.

We report in Table \ref{TABLA:N2}, the errors $|\l_{1}-\l_{h1}|$
and the estimators $\eta^{2}$ at each step of the adaptive VEM scheme.
We include in the table the terms $\theta^{2}:=\sum_{\E\in \CT_{h}}\theta_{\E}^{2}$
which arise from the inconsistency of VEM and
$J^{2}:=\sum_{\E\in \CT_{h}}\left(\sum_{\ell\in \CE_{\E}}h_{\E}\|J_{\ell}\|_{0,\ell}^{2}\right)$
which arise from the edge residuals. We also report in the table the effectivity indexes
$|\l_{1}-\l_{h1}|/\eta^{2}$.
\begin{table}[H]
\begin{center}
\caption{Test 1. Components of the error estimator and  effectivity indexes  on the adaptively refined meshes with VEM.}
\begin{tabular}{|c|c|c|c|c|c|c|}
    \hline
     $N$ & $\l_{h1}$  &$|\l_{1}-\l_{h1}|$ &$\theta^{2}$ &$J^{2}$ & $\eta^{2}$ & $\dfrac{|\l_{1}-\l_{h1}|}{\eta^{2}}$\\
\hline
38 &3.2499  &  0.1200  &       0   & 0.8245  &  0.8245  &  0.1456\\
167  &  3.1644 &   0.0345 &   0.0111&    0.2469  &  0.2580&    0.1339\\
313  &  3.1450  &  0.0151  &  0.0117  &  0.1108    &0.1225  &  0.1234\\
745  &  3.1355  &  0.0056   & 0.0054    &0.0427  &  0.0481   & 0.1171\\
1540  &  3.1327  &  0.0028  &  0.0033  &  0.0216  &  0.0249  &  0.1113\\
 3392  & 3.1311  &  0.0013   & 0.0015  &  0.0102  &  0.0117   & 0.1069\\
 5806  & 3.1307   & 0.0008  &  0.0009   & 0.0064    &0.0073    &0.1069\\
 11973 &  3.1303   & 0.0004  &  0.0005  &  0.0032  &  0.0037   & 0.1075\\
               \hline 
      \end{tabular}
\label{TABLA:N2}
\end{center}
\end{table}
It can be seen from Table \ref{TABLA:N2} that the effectivity
indexes are bounded above
and below far from zero and that the inconsistency and edge
residual terms are roughly speaking of the same order, none of
them being asymptotically negligible.

\subsection{Test 2:}

The aim of this test is to assess the performance of the adaptive
scheme when solving  a problem with a singular solution.
In this test $\O$ consists of a unit square from which
it is subtracted an equilateral triangle as shown in
Figure~\ref{FIG:SLOSH2}.  In this case $\O$ has a reentrant angle $\omega=\frac{5\pi}{3}$.
Therefore, the Sobolev exponent is $r_\O:=\frac{\pi}{\omega}=3/5$,
so that the eigenfunctions will belong to  $H^{1+r}(\O)$ for all $r< 3/5$,
but in general not to $H^{1+3/5}(\O)$.  Therefore, according
to \eqref{eq29}, using quasi-uniform  meshes,
the convergence rate for the eigenvalues should be
$|\l-\l_{h}|\approx \mathcal{O}\left(h^{6/5}\right)\approx \mathcal{O}\left(N^{-3/5}\right)$.
An efficient adaptive scheme should lead to refine the meshes
in such a way that the optimal order $|\l-\l_{h}|= \mathcal{O}\left(N^{-1}\right)$
could  be recovered.
\begin{figure}[H]
\begin{center}
\input{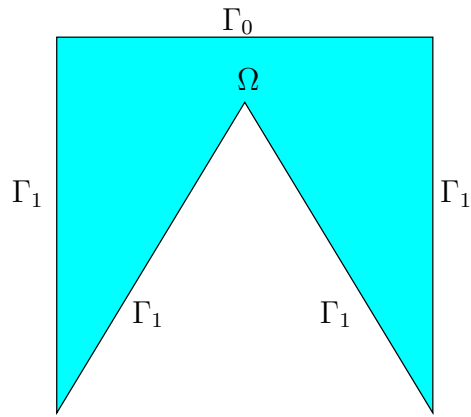}
\caption{Test 2. Domain $\Omega$.}
\label{FIG:SLOSH2}
\end{center}
\end{figure}

Figures~\ref{FIG:cinco} and \ref{FIG:seis} show the
adaptively refined meshes obtained with the VEM and FEM adaptive schemes, respectively.
 \begin{figure}[H]
 \vspace{-0.5cm}
\centering
\subfigure[Initial  mesh. ]{\includegraphics[height=4.6cm, width=4.6cm]{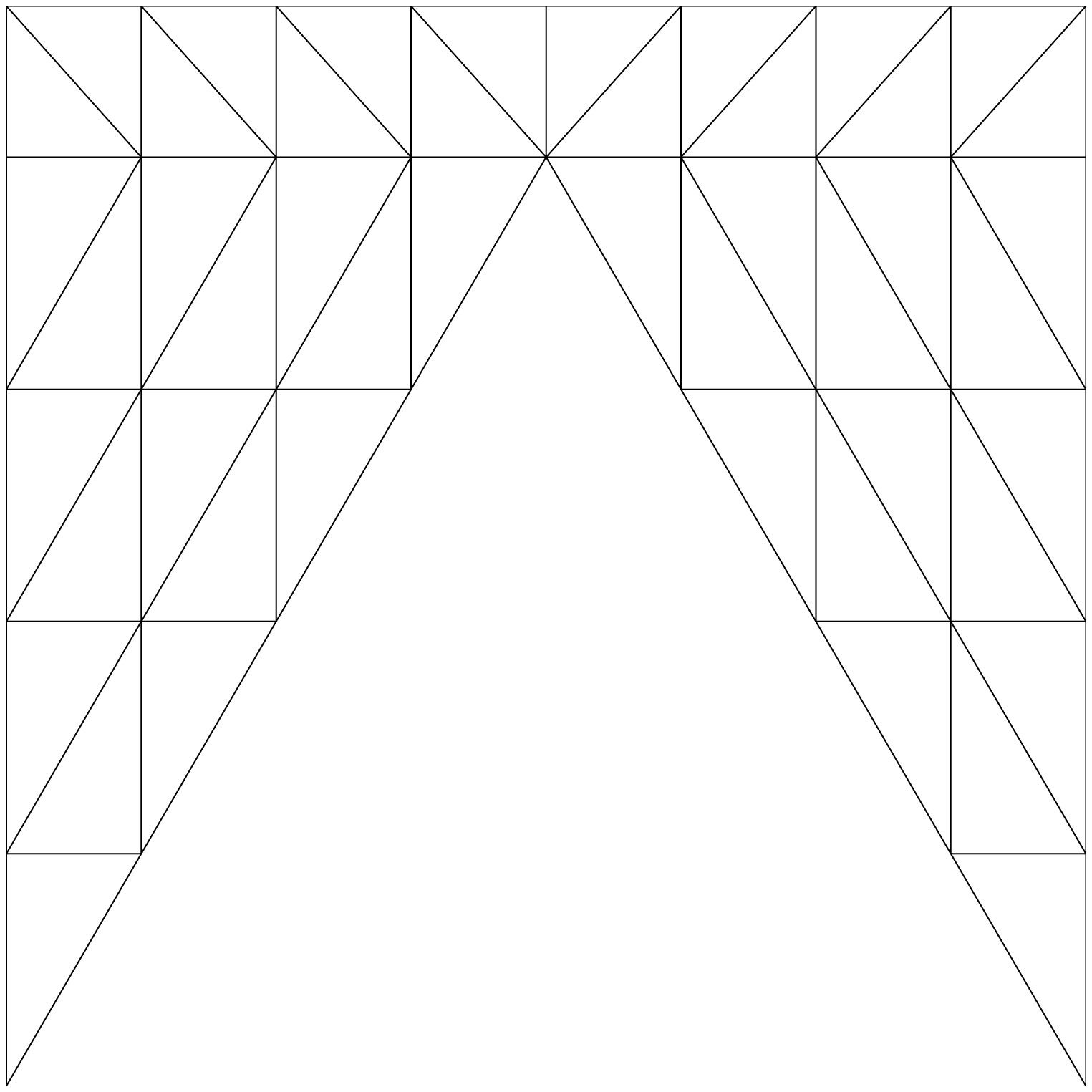}}
\subfigure[Step 1.]{\includegraphics[height=4.6cm, width=4.6cm]{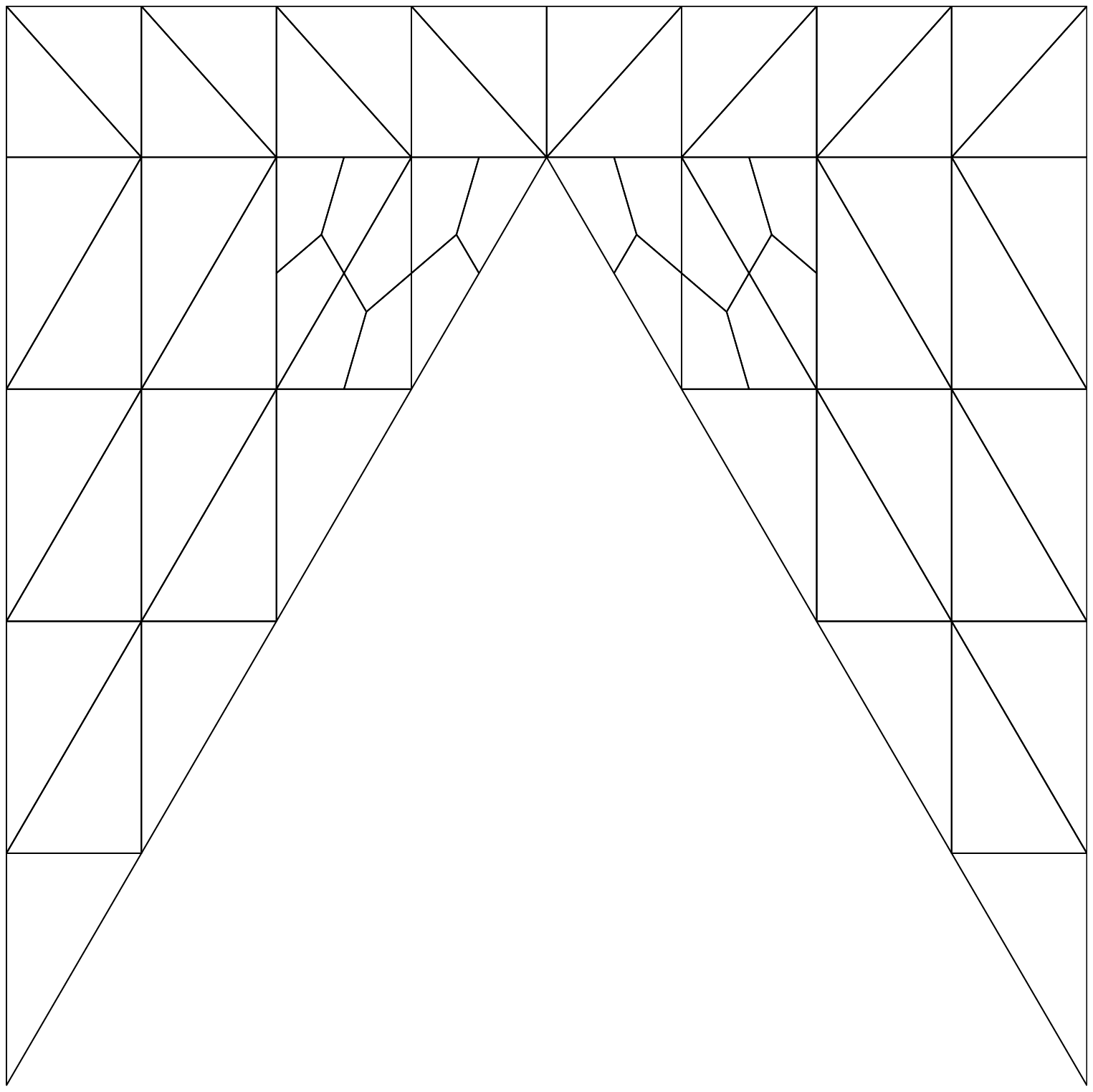}}\\\vspace{-0.3cm}
\subfigure[Step 4.]{\includegraphics[height=4.6cm, width=4.6cm]{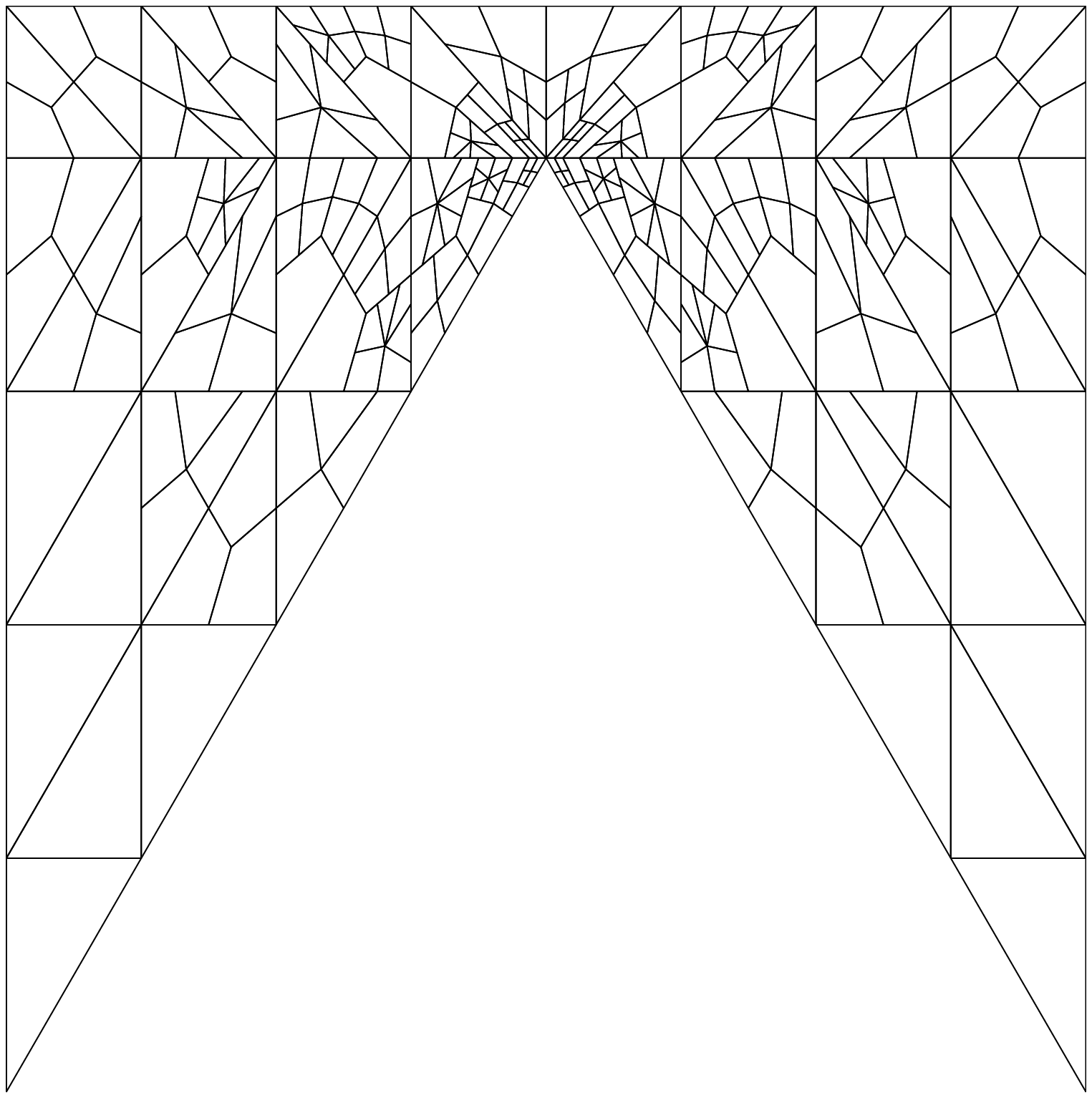}}
\subfigure[Step 6.]{\includegraphics[height=4.6cm, width=4.6cm]{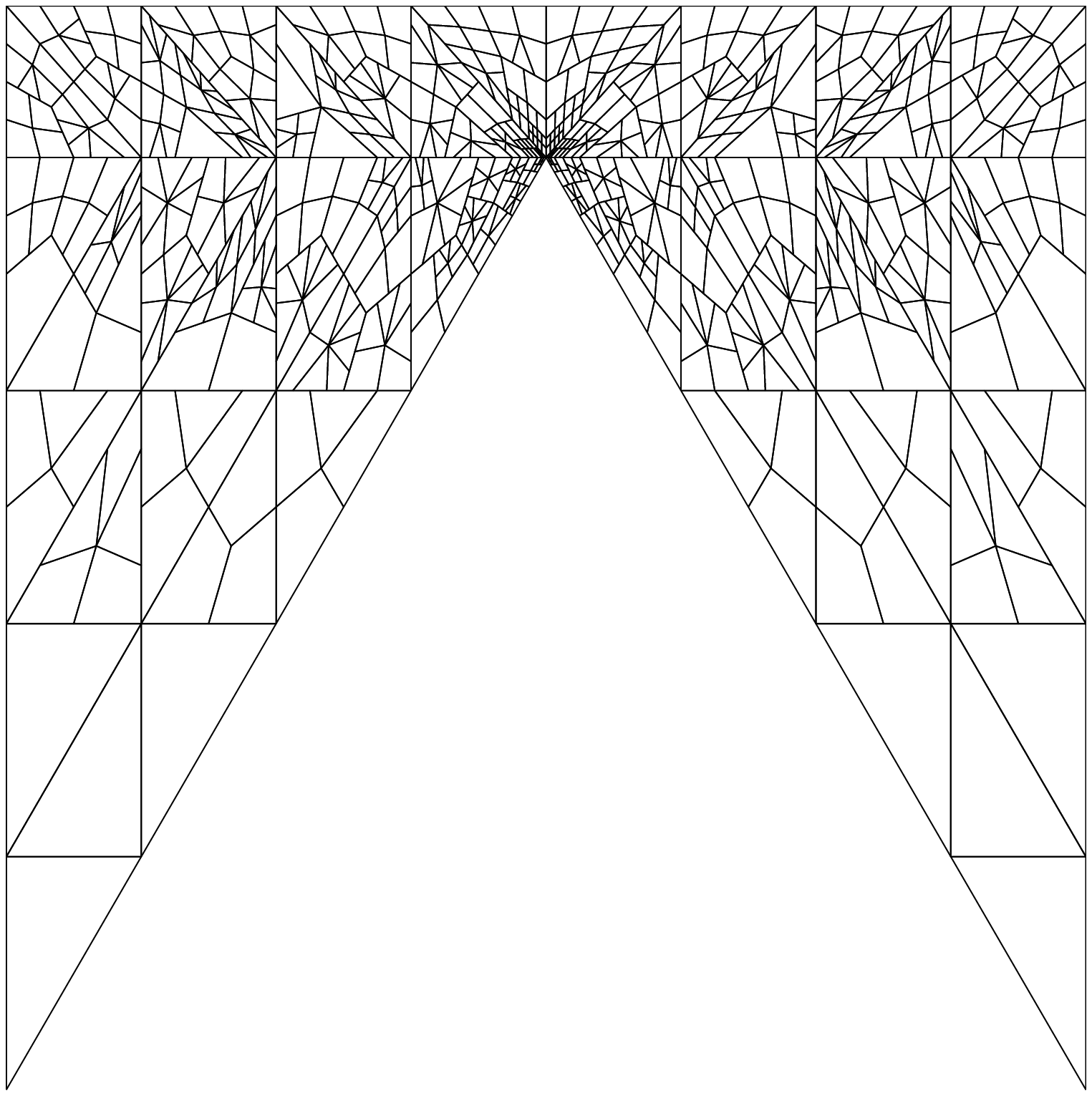}}
\vspace{-0.3cm}\caption{ Test 2. Adaptively refined meshes obtained with  VEM scheme at refinement steps 0, 1, 4 and 6.}
 \label{FIG:cinco}
\end{figure}
\begin{figure}[H]
\vspace{-0.3cm}
\centering
\subfigure[Initial mesh.]{\includegraphics[height=4.6cm, width=4.6cm]{test2uno.eps}}
\subfigure[Step 1. ]{\includegraphics[height=4.6cm, width=4.6cm]{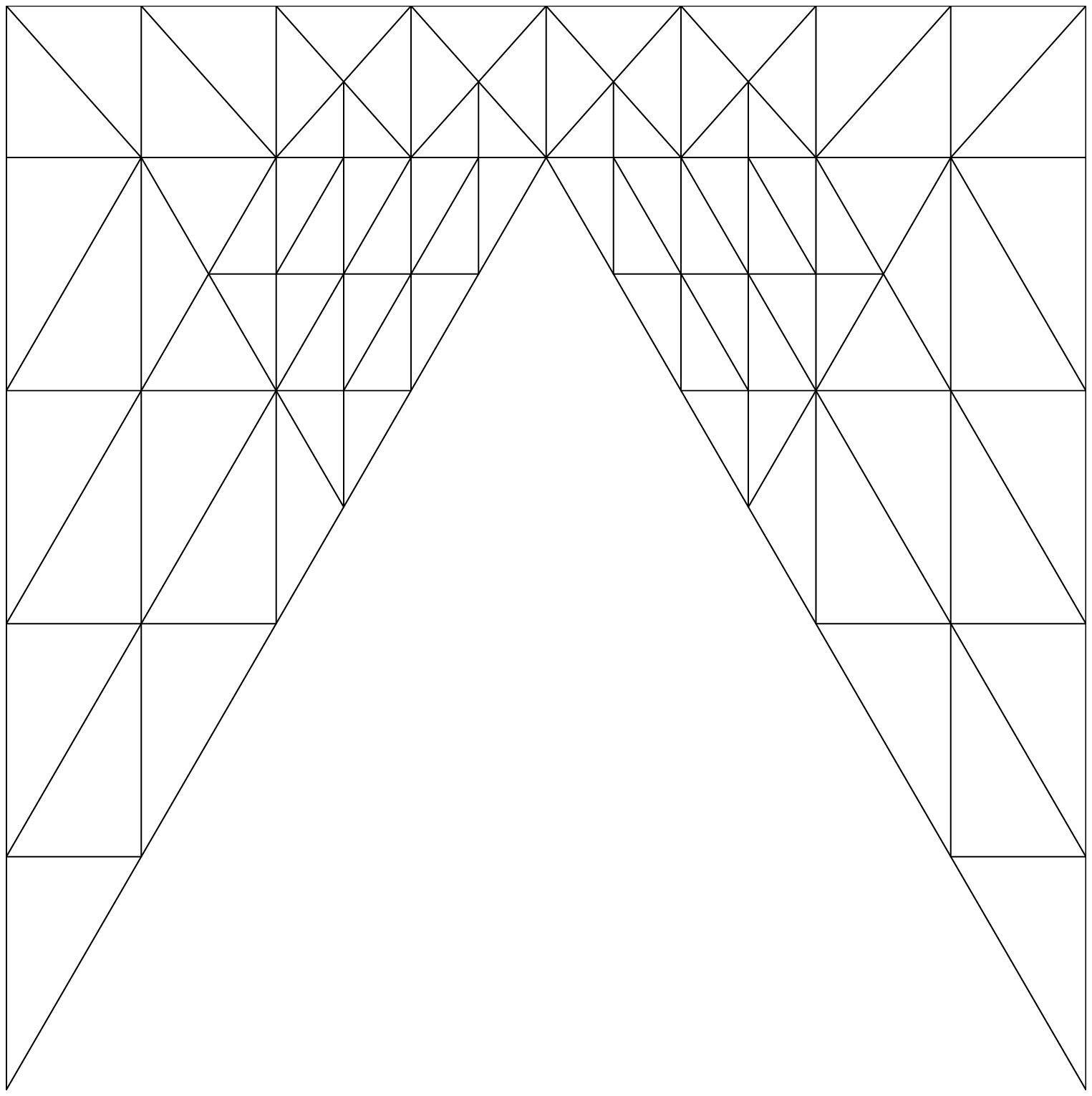}}\\\vspace{-0.3cm}
\subfigure[Step 4.]{\includegraphics[height=4.6cm, width=4.6cm]{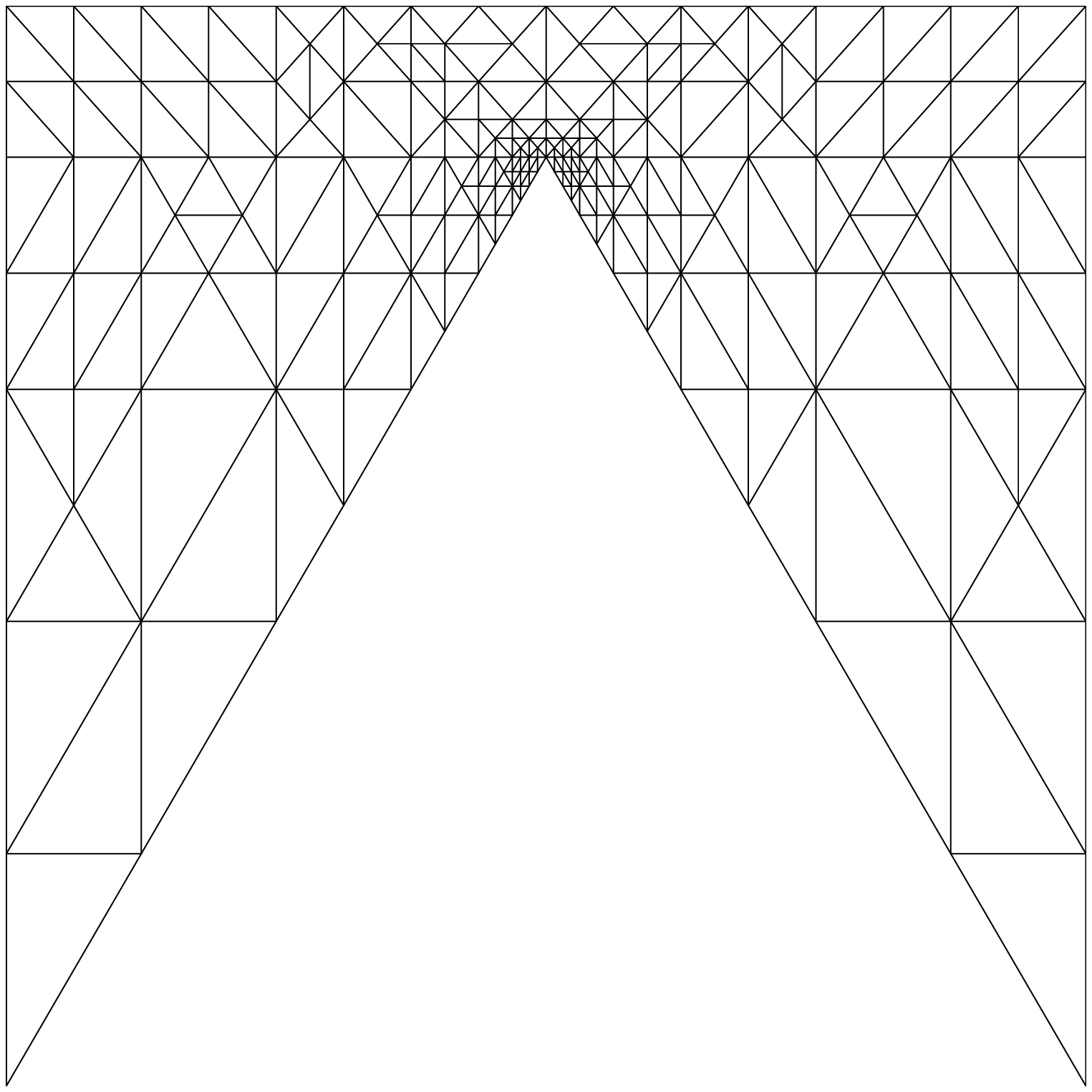}}
\subfigure[Step 6.]{\includegraphics[height=4.6cm, width=4.6cm]{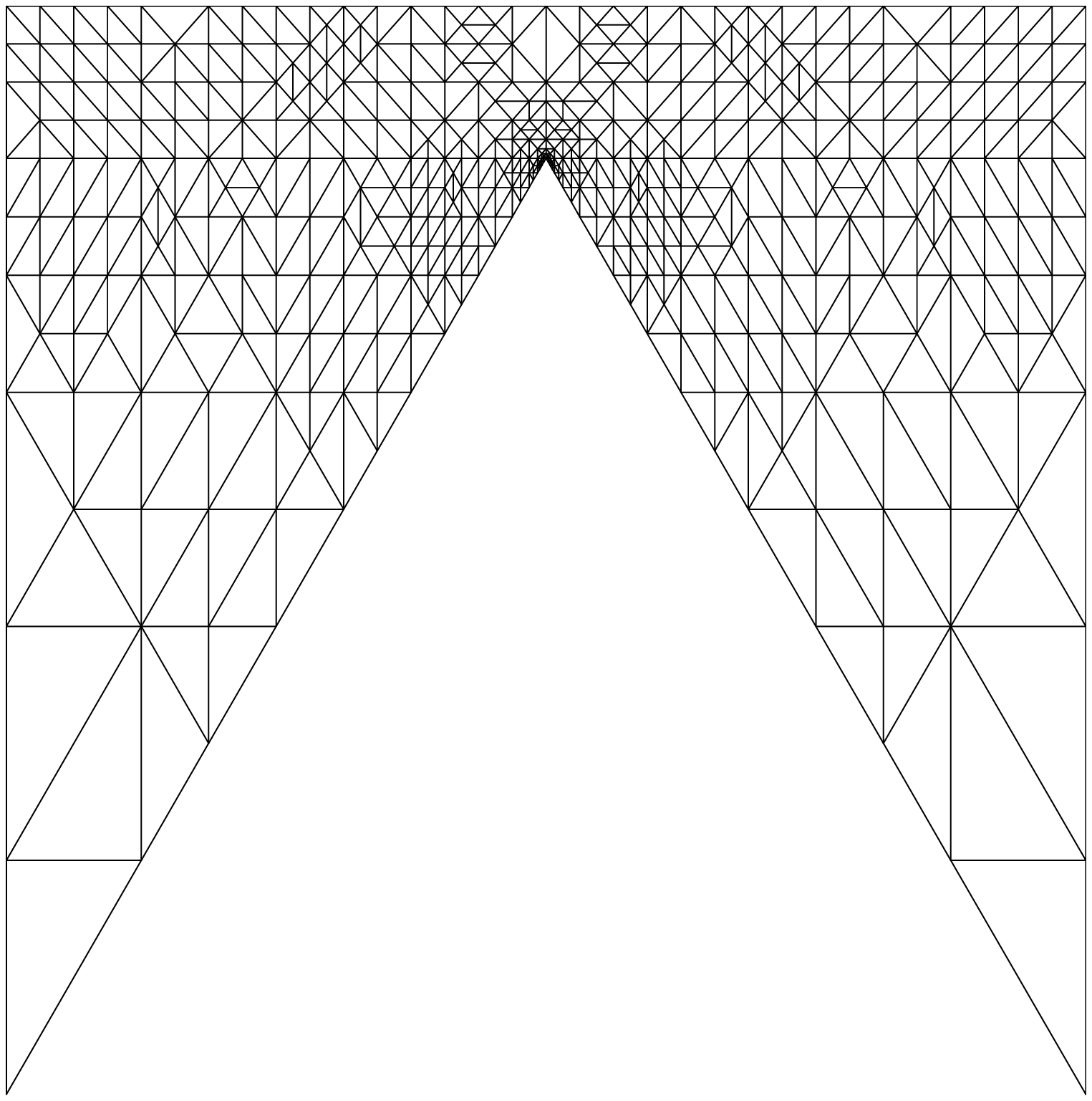}}
\vspace{-0.3cm} \caption{ Test 2. Adaptively refined meshes obtained with  FEM scheme at refinement steps 0, 1, 4 and 6.}
 \label{FIG:seis}
\end{figure}
In order to compute the errors $|\l_{1}-\l_{h1}|$,  due to the lack of an exact eigenvalue, we have used an approximation
based on a least squares fitting of the computed values obtained with
extremely refined meshes. Thus, we have obtained the value  $\l_{1}=1.9288$,
which has at least four correct significant digits.

We report in Table \ref{TABLA:5} the lowest
eigenvalue $\l_{h1}$ computed with each of the three schemes.
Each table includes the estimated convergence rate. 

\renewcommand{\arraystretch}{1.1}
\begin{table}[H]
\begin{center}
\caption{Test 2. Eigenvalue  $\l_{h1}$ computed with different schemes:  uniformly refined meshes (``Uniform FEM''), adaptively refined meshes with  FEM (``Adaptive FEM'') and adaptively refined meshes with VEM (``Adaptive VEM'').}
\begin{tabular}{|c|c||c|c||c|c|}
  \hline
    \multicolumn{2}{|c||}{Uniform FEM} &  \multicolumn{2}{|c||}{Adaptive VEM} & \multicolumn{2}{|c|}{Adaptive FEM} \\
    \hline
     $N$ & $\l_{h1}$  &   $N$ & $\l_{h1}$  &    $N$ & $\l_{h1}$  \\
\hline
  38  & 2.3083 &  38 &2.3083   &  38 & 2.3083\\ 
   123 &  2.0686 & 58   & 2.0721& 60 & 2.1067 \\  
   437  & 1.9828 & 106  & 1.9960& 85  &2.0362   \\
   1641   &1.9505 & 229   &1.9592  & 148&  1.9810   \\
   6353   &1.9377 & 350&   1.9467  & 185   &1.9678\\  
   14137   &1.9341& 666  &1.9384 & 280 &  1.9530  \\ 
   24993   &1.9325& 909 &  1.9354  & 458 &  1.9427  \\
   38291   &1.9316 & 1340 &  1.9329 & 646 &  1.9382   \\
   55921   &1.9310 &  2141  &1.9315 & 895   &1.9356   \\
   75993   &1.9306 &  3438&  1.9306  & 1593 &  1.9325   \\
   99137   &1.9303 & 5172   &1.9300  & 2122  & 1.9315  \\
   125353   &1.9301 & 8014 &  1.9296   & 3178 &  1.9306  \\
   154641   &1.9299 & 12365 &  1.9293 & 5341&  1.9298   \\
   187001   &1.9298  & 19153 &  1.9291   & 7522 &  1.9295  \\
   222433   &1.9297 &  29403 & 1.9290 & 11124 & 1.9292   \\
              \hline 
       Order  &$\mathcal{O}\left(N^{-0.68}\right)$ &      Order   & $\mathcal{O}\left(N^{-1.10}\right)$&      Order   &$\mathcal{O}\left(N^{-1.16}\right)$\\
       \hline
       $\l_1$  &1.9288 &      $\l_1$  &1.9288 &     $\l_1$  &1.9288\\
     \hline
\end{tabular}
\label{TABLA:5}
\end{center}
\end{table}

It can be seen from Table \ref{TABLA:5}, that the uniform refinement
leads to a convergence rate close to that  predicted by the
theory $\mathcal{O}\left(N^{-3/5}\right)$.
Instead, Tables~\ref{TABLA:5} show that the adaptive
VEM and FEM schemes allow us to recover the optimal  order of convergence $\mathcal{O}\left(N^{-1}\right)$.
This can be clearly seen from Figure~\ref{figurafinal}, where the three
error curves are reported. The plot also includes lines of slopes $-1$ and $-3/5$,
which correspond to the convergence rates of each scheme. 
 \begin{figure}[H]
\caption{Test 2. Error curves of  $|\l_1-\l_{h1}|$ for uniformly refined meshes (``Uniform FEM''), adaptively refined meshes with  FEM (``Adaptive FEM'') and adaptively refined meshes with VEM (``Adaptive VEM'').}
\centering\includegraphics[height=10cm, width=10cm]{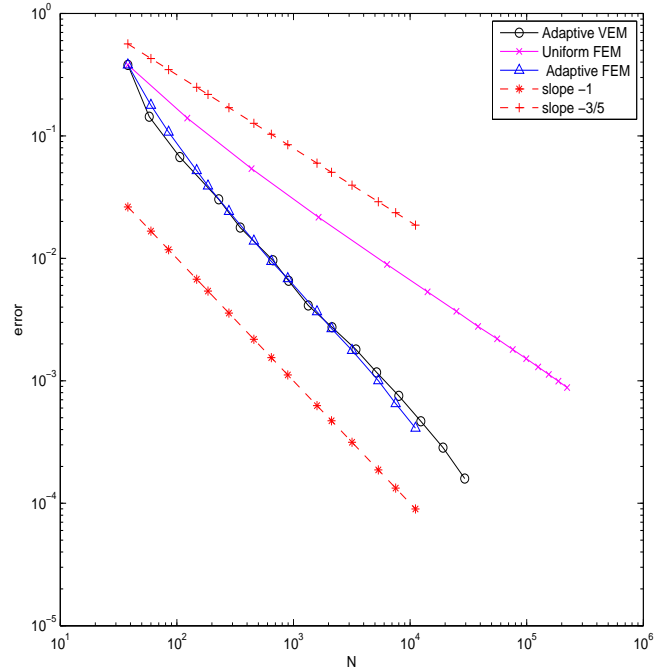}
\label{figurafinal}
\end{figure}

Finally, we report in Table~\ref{TABLA:N3} the same information
as in Table~\ref{TABLA:N2} for this test. Similar conclusions
as in the previous test follow from this table.
\begin{table}[H]
\begin{center}
\caption{Test 2. Components of the error estimator and effectivity indexes 
on the adaptively refined meshes with VEM.}
\begin{tabular}{|c|c|c|c|c|c|c|}
    \hline
     $N$ & $\l_{h1}$  &$|\l_{1}-\l_{h1}|$ &$\theta^{2}$ &$J^{2}$ & $\eta^{2}$ & $\dfrac{|\l_{1}-\l_{h1}|}{\eta^{2}}$\\
\hline
38 &2.3083 &   0.3795  &       0  &  2.3181  &  2.3181   & 0.1637\\
58   & 2.0721&    0.1433 &   0.0379 &   0.8231 &   0.8609 &   0.1664\\
 106  & 1.9960&    0.0672 &   0.0368 &   0.4188 &   0.4556 &   0.1475\\
 229   &1.9592  &  0.0304 &   0.0216  &  0.1942  &  0.2158  &  0.1408\\
 350&   1.9467  &  0.0179   & 0.0164  &  0.1359  &  0.1522  &  0.1173\\
 666  &1.9384 &   0.0096  &  0.0094  &  0.0749  &  0.0844  &  0.1143\\
 909 &  1.9354  &  0.0066   & 0.0068  &  0.0556  &  0.0624  &  0.1052\\
 1340 &  1.9329  &  0.0041 &   0.0047 &   0.0408 &   0.0454  &  0.0907\\
  2141  &1.9315 &   0.0027   & 0.0032 &   0.0275  &  0.0308  &  0.0891\\
  3438&  1.9306   & 0.0018&    0.0022 &   0.0178  &  0.0199  &  0.0904\\
               \hline 
      \end{tabular}
\label{TABLA:N3}
\end{center}
\end{table}

\section*{Conclusions}
We have derived an a posteriori error indicator for the VEM solution
of the Steklov eigenvalue problem. We have proved that it is efficient
and reliable. For lowest order elements
on triangular meshes, VEM coincides with FEM and the a posteriori error
indicators also coincide with the classical ones. However VEM allows
using general polygonal meshes including hanging nodes, which is particularly
interesting when designing an adaptive scheme. We have implemented such a
scheme driven by the proposed error indicators. We have assessed its
performance by means of a couple of tests which allow us to confirm
that the adaptive scheme yields optimal order of convergence for
regular as well as  singular solutions.  

\section*{Acknowledgments}

The authors warmfully thanks Louren\c{c}o Beir\~ao da Veiga from Universit\`a di Milano-Bicocca, Italy, by
many helpful discussions on this subject. The authors also thank Paola F. Antonietti from
Politecnico di Milano, Italy, by allowing us to use her code for  VEM adaptive mesh refinement. 

The first author was partially supported by CONICYT (Chile) through
FONDECYT project No. 1140791 and by DIUBB through project 151408 GI/VC,
Universidad del B\'io-B\'io, (Chile).
The second author was partially supported by a CONICYT (Chile)
fellowship.
The third author was partially supported by BASAL project, CMM,
Universidad de Chile (Chile).

\bibliographystyle{amsplain}

\end{document}